\NeedsTeXFormat{LaTeX2e}
\documentclass[a4paper,twoside,10pt,reqno]{amsart}
\usepackage[cm,headings]{fullpage}

\usepackage[utf8]{inputenc}
\usepackage[T1]{fontenc}

\usepackage{color}
\usepackage{graphicx}
\usepackage[pdf,matrix,arrow,curve,frame,color]{xy} 

\usepackage[final]{hyperref}

\newtheorem{theorem}{Theorem}
\newtheorem{corollary}[theorem]{Corollary}
\newtheorem{proposition}[theorem]{Proposition}
\newtheorem{lemma}[theorem]{Lemma}
\theoremstyle{definition}
\newtheorem{definition}[theorem]{Definition}
\theoremstyle{remark}
\newtheorem{question}{Question}
\newtheorem{remark}[theorem]{Remark}

\newcommand\setsep{;\ } 
\newcommand\mc{\mathcal}

\newcommand\abs[1]{\mathopen|#1\mathclose|}
\newcommand\absa[1]{\left|#1\right|}
\newcommand\absb[2]{\csname#1l\endcsname|#2\csname#1r\endcsname|}
\newcommand\norm[1]{\mathopen\|#1\mathclose\|}
\newcommand\norma[1]{\left\|#1\right\|}
\newcommand\normb[2]{\csname#1l\endcsname\|#2\csname#1r\endcsname\|}
\newcommand\tnorm[1]{\mathopen{|\mskip-0.5\thinmuskip|\mskip-0.5\thinmuskip|}#1\mathclose{|\mskip-0.5\thinmuskip|\mskip-0.5\thinmuskip|}}

\newcommand\ev[2]{\langle #1,#2\rangle}

\newcommand\de{\delta}
\newcommand\ve{\varepsilon}
\newcommand\ga{\gamma}
\newcommand\Ga{\Gamma}

\newcommand{\Id}{I\!d}

\newcommand\N{{\mathbb N}}
\newcommand\R{{\mathbb R}}

\newcommand\restr[1]{\mathclose\restriction_{#1}}

\DeclareMathOperator{\suppo}{supp_o}
\DeclareMathOperator{\spn}{span}
\DeclareMathOperator{\cspan}{\overline{span}}
\DeclareMathOperator{\sgn}{sgn}

\newcommand\Lip{\mathrm{Lip}}

\let\restriction=|
\let\comp=\circ

\frenchspacing

\begin{document}
\title{Isometric embedding of $\ell_1$ into Lipschitz-free spaces and $\ell_\infty$ into their duals}
\author{Marek Cúth}
\address{Department of Mathematical Analysis\\Charles University\\Sokolovská~83\\186~75 Praha~8\\Czech Republic}
\email{cuth@karlin.mff.cuni.cz}
\author{Michal Johanis}
\email{johanis@karlin.mff.cuni.cz}
\date{December 2015}
\thanks{
M.~Cúth is a junior researcher in the University Centre for Mathematical Modelling, Applied Analysis and Computational Mathematics (MathMAC) and was supported by grant P201/12/0290.
M.~Johanis was supported by GAČR~16-07378S.}
\keywords{Lipschitz-free spaces, isometric embedding of $\ell_1$, isometric embedding of $\ell_\infty$}
\subjclass[2010]{46B03, 54E35}
\begin{abstract}
We show that the dual of every infinite-dimensional Lipschitz-free Banach space contains an isometric copy of $\ell_\infty$
and that it is often the case that a Lipschitz-free Banach space contains a $1$-complemented subspace isometric to $\ell_1$.
Even though we do not know whether the latter is true for every infinite-dimensional Lipschitz-free Banach space, we show that the space is never rotund.

Further, in the last section we survey the relations between ``isometric embedding of~$\ell_\infty$ into the dual'' and ``containing as good copy of~$\ell_1$ as possible'' in a general Banach space.
\end{abstract}
\maketitle

\section*{Introduction}

Given a metric space $M$ it is possible to construct a Banach space $\mc F(M)$ in such a way that the metric structure of~$M$ corresponds to the linear structure of $\mc F(M)$.
This space $\mc F(M)$ is usually called the Lipschitz-free space over $M$.
The study of Lipschitz-free spaces is well-motivated: using this notion many interesting results have been proved,
e.g. if a separable Banach space~$Y$ is isometric (not necessarily linearly) to a subset of a Banach space~$X$, then $Y$ is already linearly isometric to a subspace of $X$, \cite[Corollary~3.3]{GK}.
We refer to~\cite[Section~1]{CDW} and to the end of this section for some more details concerning the construction and basic properties of these spaces.
Although Lipschitz-free spaces are easy to define, their structure even for separable metric spaces $M$ is poorly understood to this day.
The study of the linear structure of Lipschitz-free spaces over metric spaces has become an active field of study, see~\cite{CDW} and references therein.

The dual $\mc F(M)^*$ is linearly isometric to the space of Lipschitz functions $\Lip_0(M)$.
Recently it was shown in~\cite[Theorem~1]{CDW} that for every infinite metric space $M$ the space $\ell_\infty$ is isomorphic to a subspace of $\Lip_0(M)$,
which is by a classical result of Bessaga and Pełczyński equivalent to the fact that $\mc F(M)$ contains a complemented subspace isomorphic to $\ell_1$.
However, the authors of this result confessed that they do not know whether $\ell_\infty$ embeds isometrically into $\Lip_0(M)$, see~\cite[p.~2]{CDW}.
In this note we show that this is the case.

Let us recall that an ultrametric space is a metric space $(M,\rho)$ such that $\rho(x,y)\le\max\{\rho(x,z),\rho(z,y)\}$ for every $x,y,z\in M$.
Our main result is the following:

\begin{theorem}\label{t:main}
Let $M$ be an infinite metric space.
Then $\Lip_0(M)$ contains a subspace isometric to $\ell_\infty$.
If moreover the completion of $M$ has an accumulation point or contains an infinite ultrametric space, then $\mc F(M)$ contains a $1$-complemented subspace isometric to~$\ell_1$.
\end{theorem}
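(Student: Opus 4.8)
The plan is to treat the two conclusions through a common reduction to finding, inside $M$ or its completion $\cl M$ (recall $\mc F(M)=\mc F(\cl M)$ isometrically, so we may complete), a countable configuration of points on which the Lipschitz norm can be computed \emph{exactly}. Write $\de_x\in\mc F(M)$ for the evaluation at $x$, so that $\mc F(M)=\cspan\{\de_x\setsep x\in M\}$ and $\ev f{\de_x}=f(x)$; for $x\ne y$ put $m_{x,y}=(\de_x-\de_y)/d(x,y)$, a norm-one element of $\mc F(M)$. The basic remark for the $\ell_\infty$-part is a duality reformulation: a sequence $(f_k)$ in $\Lip_0(M)$ spans an \emph{isometric} copy of $\ell_\infty$ via $(a_k)\mapsto\sum_k a_kf_k$ as soon as (i) the coordinate map $\Phi\colon M\to\ell_1$, $\Phi(t)=(f_k(t))_k$, is $1$-Lipschitz, i.e. $\sum_k\abs{f_k(s)-f_k(t)}\le d(s,t)$ for all $s,t$ (this forces $\norm{\sum_k a_kf_k}_{\Lip}\le\norm a_\infty$), and (ii) each coordinate is \emph{independently saturated}: for every $k$ there are $s_k,t_k$ with $f_k(s_k)-f_k(t_k)=d(s_k,t_k)$ while $f_j(s_k)=f_j(t_k)$ for $j\ne k$ (testing on $(s_k,t_k)$ gives $\norm{\sum_ja_jf_j}_{\Lip}\ge\abs{a_k}$). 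For the $\ell_1$-part I would dually look for \emph{disjointly supported} unit molecules $u_k=m_{p_k,q_k}$ for which \emph{every} sign pattern is simultaneously norming --- for each $\sigma\in\{-1,1\}^\N$ a $1$-Lipschitz $f$ with $\ev f{u_k}=\sigma_k$ for all $k$ --- which yields $\norm{\sum_k c_ku_k}=\sum_k\abs{c_k}$; the required norm-one projection I would produce from a $1$-Lipschitz retraction $r$ of $\cl M$ onto the (countable) subset $N$ carrying the $u_k$, since then the linearization $\widehat r$ is a norm-one projection of $\mc F(M)$ onto $\mc F(N)$.

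The overall structure is then a dichotomy according to whether $\cl M$ is totally bounded. If it is, then being infinite it has an accumulation point $x$, and I extract distinct $x_n\to x$ with $d_n:=d(x_n,x)$ decreasing so fast that, say, $d_{n+1}\le\frac1{10}d_n$; if it is not, then $M$ carries an infinite $\ve$-separated set. In the convergent case the isometric $\ell_\infty$ is obtained from ``full-ramp pairs'': grouping the $x_n$ into consecutive pairs $(x_{2k-1},x_{2k})$ and letting $f_k$ take the values $\pm\frac12 d(x_{2k-1},x_{2k})$ at the two points of its pair and $0$ at every other $x_n$ and at $x$, the rapid decrease of $(d_n)$ makes all cross terms small against the relevant distances, so (i) holds, while each pair saturates its own coordinate exactly, giving (ii); the separated case is handled analogously after thinning the separated set (via stabilisation of the distance pattern) so that the within-pair distances are matched by the separation. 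This settles the first assertion of the theorem.

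For the second assertion I would use the extra hypotheses to manufacture the full sign-freedom configuration. The clean model is an infinite \emph{equilateral} set $E=\{e_k\}\subseteq\cl M$, all of whose mutual distances equal some $r>0$: there any function valued in an interval of length $r$ is automatically $1$-Lipschitz, so pairing the points of $E$ and setting the two values of each pair to $\pm\frac r2\sigma_k$ realizes every sign pattern, and the molecules $u_k=m_{e_{2k-1},e_{2k}}$ span an isometric $\ell_1$. An infinite ultrametric subspace supplies such a net (or reduces to the accumulation case): using the strong triangle inequality and the resulting nested clopen-ball/tree structure --- together with a Ramsey-type stabilisation of the distance values --- one isolates either an equilateral net or an accumulation point, and the nearest-ball map furnishes the $1$-Lipschitz retraction onto $N=E$ needed for $1$-complementation. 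In the accumulation-point case one again starts from a rapidly convergent sequence and the same disjoint-molecule scheme.

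The step I expect to be genuinely hard is the \emph{exactness} of the $\ell_1$-copy with all signs at once, especially in the accumulation-point case. The naive molecules $m_{x_n,x}$ (or consecutive ones) are only \emph{almost} isometric: two opposite signs on coordinates $n,m$ demand $d_n+d_m\le d(x_n,x_m)$, i.e. that $x$ lie metrically between $x_n$ and $x_m$, which cannot hold for infinitely many points simultaneously because the triangle inequality $d(x_n,x_m)\le d_n+d_m$ is slack without betweenness. Thus the whole difficulty is to reconcile the two competing demands --- the $1$-Lipschitz (``$\ell_1$-contraction'') bound, which wants the values clustered, and saturation at slope exactly $1$ with arbitrary signs, which wants them spread --- and, unlike for $\ell_\infty$ where a single coordinate is saturated at a time, the limit point couples all coordinates through the shared slack. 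Closing this gap exactly (rather than up to $\ve$), and simultaneously exhibiting the norm-one projection when no $1$-Lipschitz retraction is at hand, is the part I would attack last and expect to require the most care; the equilateral/ultrametric case is precisely the situation where the strong triangle inequality removes the slack and makes everything exact.
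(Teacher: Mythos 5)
There are two genuine gaps, one in each half of your scheme, and both are located exactly where the paper's key lemma (Lemma~\ref{l:seq-isometry}) does something you do not. For the $\ell_\infty$ part, your reduction demands \emph{exact} saturation of every coordinate: condition (ii) asks for pairs $(s_k,t_k)$ with $f_k(s_k)-f_k(t_k)=\rho(s_k,t_k)$, jointly with the $\ell_1$-contraction bound (i). This is strictly stronger than what an isometric embedding needs (each $\norm{f_k}_{\Lip}$ must equal $1$, but the supremum need not be attained), and it is provably unattainable in one of the cases your dichotomy must cover. Indeed, (i) applied to the pairs $(s_k,s_j)$ and $(t_k,t_j)$, combined with (ii) (which gives $f_k(s_j)=f_k(t_j)$ and $f_j(s_k)=f_j(t_k)$), yields the quadrilateral inequality $\rho(s_k,t_k)+\rho(s_j,t_j)\le\rho(s_k,s_j)+\rho(t_k,t_j)$ for $j\ne k$. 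Now take $M=\{x_n\}$ with $\rho(x_k,x_n)=2-\frac1k$ for $n>k$, a bounded uniformly separated space (one of the spaces in Remark~\ref{r:unknown_ex}). Among infinitely many pairwise disjoint pairs there are two with $s_k=x_a$, $t_k=x_b$, $s_j=x_c$, $t_j=x_d$ and $a<b<c<d$, and the inequality reads $\bigl(2-\frac1a\bigr)+\bigl(2-\frac1c\bigr)\le\bigl(2-\frac1a\bigr)+\bigl(2-\frac1b\bigr)$, i.e. $c\le b$, a contradiction (if instead infinitely many pairs share a point $s$, the same computation forces $\rho(s,t_k)+\rho(s,t_j)\le\rho(t_k,t_j)<2$, impossible since all distances exceed $1$). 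So your scheme cannot prove even the first assertion for this $M$. The paper's Lemma~\ref{l:seq-isometry} requires only $\bigl(r_{2n}+r_{2n+1}\bigr)/\rho(x_{2n},x_{2n+1})\to1$ and recovers an \emph{exact} isometry by spreading each $\ell_\infty$-coordinate over an infinite subfamily of pairs, $f_k=\sum_l\bigl(g_{2n^k_l}-g_{2n^k_l+1}\bigr)$ over disjoint subsequences $\{n^k_l\}_l$, so that a supremum of quotients tending to $1$, rather than one attained quotient, gives $\norm{\cdot}\ge\abs{a_k}$; this relaxation is the idea your proposal is missing. (Also, even in the convergent case your explicit recipe fails: with $d_{n+1}\le\frac1{10}d_n$ one gets $\abs{f_k(x_{2k})}\ge\frac12(d_{2k-1}-d_{2k})\ge0.45\,d_{2k-1}$ while $\rho(x_{2k},x_{2k+1})\le d_{2k}+d_{2k+1}\le0.11\,d_{2k-1}$ and $f_k(x_{2k+1})=0$, so $f_k$ alone is not $1$-Lipschitz; the two values must be split asymmetrically, according to distance to the limit.)

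For the $\ell_1$ part, your proposal is complete only in the equilateral case, and the two reductions to it fail. First, the accumulation-point case, which the theorem asserts, is exactly the case you leave open; moreover your diagnosis (exactness needs betweenness through the limit, hence is impossible for infinitely many points) is mistaken. The paper absorbs the slack into the radii: with $\de_n=\frac12\bigl(\rho(x_{2n},x_1)+\rho(x_{2n+1},x_1)-\rho(x_{2n},x_{2n+1})\bigr)$ put $r_{2n}=\rho(x_{2n},x_1)-\de_n$ and $r_{2n+1}=\rho(x_{2n+1},x_1)-\de_n$; then $r_{2n}+r_{2n+1}=\rho(x_{2n},x_{2n+1})$ \emph{exactly}, and after passing to a subsequence with $\rho(x_m,x_1)\le\frac12\de_n$ for $m>2n+1$ the cross-pair inequalities $r_m+r_n\le\rho(x_m,x_n)$ all hold, so Lemma~\ref{l:seq-isometry} applies with equality. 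Second, the claimed ultrametric dichotomy ``equilateral net or accumulation point'' is false: $M=\{x_n\}$ with $\rho(x_k,x_n)=1+\frac1{\min(k,n)}$ is a bounded, uniformly discrete ultrametric space with no infinite equilateral subset, yet the theorem covers it; the paper treats precisely such monotone distance patterns in Lemma~\ref{l:inc-dec} (and unbounded ones in Lemma~\ref{l:unboundedDelta}), again with asymmetric radii such as $r_{2n}=\rho(x_{2n},x_{2n+1})-\frac12d_{2n+1}$, $r_{2n+1}=\frac12d_{2n+1}$. Third, the complementation: a $1$-Lipschitz retraction of $M$ onto the carrier set need not exist (adjoin to an equilateral set $E$ a point $p$ with $\rho(p,e_1)=\rho(p,e_2)=\frac r2$; if $\pi(p)=e_j$, pick $i\in\{1,2\}$, $i\ne j$, and then $\rho(\pi(p),\pi(e_i))=r>\frac r2=\rho(p,e_i)$), and even when it exists it projects onto $\mc F(N)$, which strictly contains $\cspan\{u_k\}$ and, $N$ being ultrametric, is never isometric to $\ell_1$ by \cite[Corollary~6]{DKP}, so a further projection would still be needed. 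The paper instead linearizes the map $x\mapsto\sum_nf_n(x)e_n$ built from the disjointly supported bumps (Lemma~\ref{l:sum-lip}), which is $1$-Lipschitz and lands directly in $\spn\{e_n\setsep n\in\N\}$; this step has no substitute in your outline.
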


The first part of this result is contained in Theorem~\ref{t:linfty}, the ultrametric case is contained in Proposition~\ref{p:ultrametric}.
The case of ultrametric spaces is interesting in connection with~\cite[Theorem 2]{CD} and~\cite[Corollary 6]{DKP},
where it is proved that for every separable ultrametric space $M$ the space $\mc F(M)$ is isomorphic to $\ell_1$ but it is never isometric to $\ell_1$.

Our main result is not only a technical improvement of~\cite[Theorem~1]{CDW}.
There are some consequences for the structure of a general Lipschitz-free Banach space which could not be deduced from the isomorphic variant~\cite[Theorem~1]{CDW}, see Corollary~\ref{c:fpp}.
As mentioned above, it is a classical result that for any Banach space $X$ its dual $X^*$ has a subspace isomorphic to $\ell_\infty$ if and only if $\ell_1$ is isomorphic to a complemented subspace of $X$.
In connection with Theorem~\ref{t:main} we are naturally interested in an isometric variant of this result
and we surveyed the relation between ``isometric embedding of $\ell_\infty$ into the dual'' and ``containing as good copy of $\ell_1$ as possible'' for a general Banach space.
We collected several general results which are available at the last section of this note.
Many implications and counterexamples are known; nevertheless, several implications are up to our knowledge unknown.
For example, the following seems to be open:

\begin{question}
Let $X$ be a (separable) Banach space such that $X^*$ has a subspace isometric to $\ell_\infty$.
Does $X$ contain a $1$-complemented subspace isomorphic to $\ell_1$?
\end{question}

It is not true in general that isometric embedding of $\ell_\infty$ into $X^*$ implies isometric embedding of $\ell_1$ into $X$.
Therefore, it is of some interest to find out whether $\ell_1$ embeds isometrically into every Lipschitz-free Banach space over an infinite metric space.
The case when $M$ has an accumulation point is covered by Theorem~\ref{t:main}.

\begin{question}\label{q:discrete}
Let $M$ be a uniformly discrete metric space.
Does $\mc F(M)$ contain a subspace isometric to~$\ell_1$?
\end{question}
In particular, for a few concrete metric spaces for which the answer is not known to us see Remark~\ref{r:unknown_ex}.

Even though we do not know the answer to Question~\ref{q:discrete}, we show that $\mc F(M)$ is never rotund, which supports the conjecture that the answer to Question~\ref{q:discrete} is positive.
Recall that a normed linear space $X$ is rotund if $\norm{x+y}<2$ for any $x,y\in S_X$, $x\neq y$.

\begin{proposition}\label{p:notRotund}
Let $M$ be a metric space of cardinality at least $3$.
Then $\mc F(M)$ is not rotund.
\end{proposition}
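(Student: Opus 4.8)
The plan is to exhibit two distinct points $u\ne v$ on the unit sphere of $\mc F(M)$ whose midpoint again has norm one, i.e. $\norm{u+v}=2$; by the characterization of rotundity recalled just above the statement, producing such a pair is exactly what is needed. I would fix the base point $0\in M$ underlying the construction and, using $\abs{M}\ge 3$, choose two distinct points $x,y\in M\setminus\{0\}$. The natural candidates for the unit vectors are the elementary molecules
\[
u=\frac{\delta(x)}{\rho(x,0)},\qquad v=\frac{\delta(y)}{\rho(y,0)},
\]
where $\delta(p)\in\mc F(M)$ denotes the evaluation functional at $p$. Recall that $\norm{\delta(p)}=\rho(p,0)$ for every $p$ (since $\delta(0)=0$), so $u$ and $v$ lie on $S_{\mc F(M)}$, and they are distinct because $\delta(x)$ and $\delta(y)$ are linearly independent for $x\ne y$.

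To bound $\norm{u+v}$ from below I would use the isometric duality $\mc F(M)^*=\Lip_0(M)$ stated in the introduction and produce a single norm-one Lipschitz function witnessing the value $2$ on both molecules at once. Define $f$ on the three-point set $\{0,x,y\}$ by $f(0)=0$, $f(x)=\rho(x,0)$, $f(y)=\rho(y,0)$. The key point is that $f$ is $1$-Lipschitz there: the constraints from the pairs $(0,x)$ and $(0,y)$ hold with equality, while the only remaining one, $\abs{f(x)-f(y)}=\abs{\rho(x,0)-\rho(y,0)}\le\rho(x,y)$, is precisely the triangle inequality. By the McShane–Whitney extension theorem, $f$ extends to a $1$-Lipschitz function on all of $M$ that still vanishes at $0$, so $f\in\Lip_0(M)$ with $\norm{f}_{\Lip}=1$ (the lower bound for the norm again coming from the pair $(0,x)$).

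It then remains only to read off the pairing. By construction $\ev{u}{f}=f(x)/\rho(x,0)=1$ and likewise $\ev{v}{f}=1$, whence
\[
\norm{u+v}\ge\ev{u+v}{f}=2.
\]
Combined with the trivial estimate $\norm{u+v}\le\norm u+\norm v=2$, this yields $\norm{u+v}=2$ with $u\ne v$ on the unit sphere, so $\mc F(M)$ is not rotund.

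I would emphasize that the more naive approach, writing one molecule as a convex combination of two others, only works when some point lies metrically between two others and would fail for, e.g., uniformly discrete spaces; passing to the dual and building the common supporting functional $f$ is what handles every metric space with at least three points uniformly. The only step requiring care is checking that the prescribed three values determine a $1$-Lipschitz function, and this, as noted, is nothing more than the triangle inequality, so I do not expect a genuine obstacle.
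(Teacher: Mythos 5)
Your proof is correct. You use the very same witnesses as the paper, namely $u=\de_x/\rho(x,0)$ and $v=\de_y/\rho(y,0)$, but you reach $\norm{u+v}=2$ by a more direct route. The paper derives the proposition as an immediate consequence of its Lemma~\ref{l:twoPoints}, which computes $\norm{a\de_x+b\de_y}$ for \emph{all} $a,b\in\R$: there one first notes (as you do) that any prescription of values at $\{0,x,y\}$ compatible with the three Lipschitz constraints extends to a norm-one function in $\Lip_0(M)$, and then identifies the supremum of $\abs{au+bv}$ over the resulting compact convex constraint set via the Bauer maximum principle and a case analysis over its extreme points. Your argument isolates exactly the one extreme point needed, $(u,v)=(\rho(x,0),\rho(y,0))$, builds the single norming functional $f$ by McShane--Whitney extension (the $1$-Lipschitz condition on the triple being precisely the reverse triangle inequality, as you say), and pairs it against $u+v$; this only uses the case $a,b>0$ and avoids the full computation. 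What you lose relative to the paper is the by-product: the complete description of the unit ball of $\mc F(M)$ restricted to $\spn\{\de_x,\de_y\}$, which the paper uses for its Figure~\ref{fig:norm} and which is of independent interest (e.g.\ in connection with the remark that free spaces over ultrametric spaces are not isometric to $\ell_1(\Ga)$ yet still fail rotundity). For the proposition itself, your shorter argument is entirely adequate, including the needed observations that $\norm{\de_p}=\rho(p,0)$ and that $\de_x,\de_y$ are linearly independent, so $u\neq v$.
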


This result is interesting also for finite metric spaces:
No Lipschitz-free space over an ultrametric space is isometric to $\ell_1(\Ga)$, see~\cite[Corollary 6]{DKP}.
Hence e.g. if $M$ is the ultrametric space $\{0,x,y\}$ with metric defined by $\rho(x,y)=\rho(x,0)=\rho(y,0)=1$,
then $\mc F(M)$ is not isometric to $\ell_1^2$, however, it is not rotund.

As a consequence of Proposition~\ref{p:notRotund} it follows that the free space norm is quite rare,
as the non-rotund norms are of the first category in the metric space of all equivalent norms, provided that there is at least one equivalent rotund norm on the space; \cite{FZZ}, see \cite[Theorem~II.4.1]{DGZ}.

\medskip
Let us recall some basic facts concerning the Lipschitz-free spaces (for the proofs we refer to~\cite[Section~1]{CDW}).
Let $(M,\rho,0)$ be a pointed metric space, i.e. a metric space with a distinguished ``base point'' denoted by $0$.
Consider the space $\Lip_0(M)$ of all real-valued Lipschitz functions that map $0\in M$ to $0\in\R$.
It has a vector space structure and the minimal Lipschitz constant of $f\in\Lip_0(M)$ given by $\norm f_{\Lip}=\sup\left\{\frac{\abs{f(x)-f(y)}}{\rho(x,y)}\setsep x,y\in M, x\neq y\right\}$ gives rise to a norm on $\Lip_0(M)$.
The space $\smash[t]{\bigl(\Lip_0(M),\norm\cdot_{\Lip}\bigr)}$ is then a Banach space.
For any $x\in M$ we denote by $\de_x\in\Lip_0(M)^*$ the evaluation functional, i.e. $\ev{\de_x}f=f(x)$ for every $f\in\Lip_0(M)$.
Let $\mc F(M)$ be the closure of the linear span of $\{\de_x\setsep x\in M\}$ with the dual space norm denoted simply by $\norm\cdot$.
It is easy to see that $\norm{\de_x-\de_y}=\rho(x,y)$ for any $x,y\in M$.
This means that $M$ can be considered as a metric subspace of $\mc F(M)$ via the isometric embedding $x\mapsto\de_x$.
The space $\mc F(M)$ is usually called the Lipschitz-free Banach space over $M$ and it is uniquely characterised by the following universal property:

Let $X$ be a Banach space and suppose that $L\colon M\to X$ is a Lipschitz mapping satisfying $L(0)=0$.
Then there exists a unique linear operator $\widehat{L}\colon\mc F(M)\to X$ extending $L$, i.e. the following diagram commutes:
\[
\xymatrix{
M \ar[r]^L \ar[d]_(.45){\de} & X\\
\mc F(M) \ar[ur]_{\widehat L}
}
\]
Moreover, $\norm{\widehat L}=\norm L_{\Lip}$, where $\norm L_{\Lip}$ denotes the minimal Lipschitz constant of $L$.

Using this universal property of $\mc F(M)$ for $X=\R$ it can be rather easily shown that $\mc F(M)^*$ is linearly isometric to $\Lip_0(M)$.
It is immediate that the $w^*$ topology on bounded subsets of $\Lip_0(M)$ is the topology of pointwise convergence.

Further, observe that it does not matter how the point $0\in M$ is chosen:
Let $e\in M$ be another base point and denote $N=(M,\rho,e)$.
Then $T\colon\Lip_0(M)\to\Lip_0(N)$ defined by $T(f)=f-f(e)$ is a linear isometry onto that is also $w^*$--$w^*$ continuous (by the Banach-Dieudonné theorem).
Thus $T$ is a dual operator to $T^*\restr{\mc F(N)}\colon\mc F(N)\to\mc F(M)$, which is therefore a linear isometry onto.

Similarly, we may without loss of generality assume that $M$ is complete:
Denote by $N$ the completion of $M$.
Then $T\colon\Lip_0(N)\to\Lip_0(M)$ defined by $T(f)=f\restr M$ is a linear isometry onto that is also $w^*$--$w^*$ continuous.
So, as above, the spaces $\mc F(N)$ and $\mc F(M)$ are linearly isometric.

It is also easy to observe that if $N$ is a subspace of a metric space $M$, then $\mc F(N)$ is linearly isometric to a subspace of $\mc F(M)$.
Using this together with the universal property of $\mc F(M)$ we can see that the metric structure of $M$ corresponds to the linear structure of $\mc F(M)$.
For example, if $N$ is bi-Lipschitz equivalent (resp. isometric) to a subset of $M$, then $\mc F(N)$ is linearly isomorphic (resp. linearly isometric) to a subspace of $\mc F(M)$.

The notation and terminology we use are relatively standard.
If $M$ is a metric space, $x\in M$ and $r\ge0$, we denote by $U(x,r)$ the open ball centred at $x$ with radius $r$.

\section{\texorpdfstring{Isometric embedding of $\ell_\infty$ into $\Lip_0(M)$}{Isometric embedding of l_infty into Lip_0(M)}}

In this section we prove Theorem~\ref{t:linfty} and mention some consequences for the structure of a general Lipschitz-free Banach space, see Corollary~\ref{c:fpp}.
Our main tool throughout is Lemma~\ref{l:seq-isometry}, which allows us to embed isometrically $\ell_\infty$ into $\Lip_0(M)$, resp. $\ell_1$ into $\mc F(M)$.

For a mapping $f\colon X\to Y$, where $X$ is a set and $Y$ a vector space, we denote $\suppo f=f^{-1}(Y\setminus\{0\})$.

\begin{lemma}\label{l:sum-lip}
Let $X$, $Y$ be normed linear spaces, $\Ga$ a set, and let $f_\ga\colon X\to Y$ be $L$-Lipschitz for each $\ga\in\Ga$.
Suppose that the collection $\{\suppo f_\ga\}_{\ga\in\Ga}$ is disjoint.
Then $f=\sum_{\ga\in\Ga}f_\ga$ is $L$-Lipschitz.
\end{lemma}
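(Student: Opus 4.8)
The plan is to estimate the Lipschitz constant of $f=\sum_{\ga\in\Ga}f_\ga$ directly from the definition, taking two arbitrary points $x,y\in X$ and bounding $\norm{f(x)-f(y)}$ by $L\norm{x-y}$. The essential feature to exploit is the disjointness of the supports $\suppo f_\ga = f_\ga^{-1}(Y\setminus\{0\})$, which guarantees that for each point of $X$, at most one of the summands $f_\ga$ is nonzero there. First I would observe that the sum $f(x)=\sum_{\ga\in\Ga}f_\ga(x)$ is well defined pointwise: for a fixed $x$ there is at most one index $\ga$ with $f_\ga(x)\neq0$, so the (possibly uncountable) sum has at most one nonzero term and thus trivially converges.

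Next I would fix $x,y\in X$ and split into cases according to how many of the two points lie in the supports. The main point is a simple case analysis. If both $x$ and $y$ lie outside every support, then $f(x)=f(y)=0$ and the estimate is trivial. If exactly one point, say $x$, lies in some support, i.e.\ $x\in\suppo f_{\ga_0}$ while $y$ lies in no support, then $f(y)=0$ and $f(x)=f_{\ga_0}(x)$, and moreover $f_{\ga_0}(y)=0$ (since $y\notin\suppo f_{\ga_0}$), so $\norm{f(x)-f(y)}=\norm{f_{\ga_0}(x)-f_{\ga_0}(y)}\le L\norm{x-y}$ by the $L$-Lipschitz property of $f_{\ga_0}$.

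The only genuinely nontrivial case, and the one I expect to require a moment's care, is when $x$ and $y$ lie in the supports of two \emph{distinct} indices, say $x\in\suppo f_{\ga_1}$ and $y\in\suppo f_{\ga_2}$ with $\ga_1\neq\ga_2$. Here $f(x)=f_{\ga_1}(x)$ and $f(y)=f_{\ga_2}(y)$, and one cannot immediately invoke a single Lipschitz constant since two different functions are involved. The trick will be to insert the zero values: because $y\notin\suppo f_{\ga_1}$ we have $f_{\ga_1}(y)=0$, and because $x\notin\suppo f_{\ga_2}$ we have $f_{\ga_2}(x)=0$. Hence $f(x)-f(y)=\bigl(f_{\ga_1}(x)-f_{\ga_1}(y)\bigr)$ and also equals $-\bigl(f_{\ga_2}(y)-f_{\ga_2}(x)\bigr)$, so in fact $f(x)-f(y)=f_{\ga_1}(x)-f_{\ga_1}(y)$ and the estimate $\norm{f(x)-f(y)}\le L\norm{x-y}$ follows from the Lipschitz property of $f_{\ga_1}$ alone. (The case $\ga_1=\ga_2$ is immediate from the single function being $L$-Lipschitz.) Collecting the cases shows $\norm{f(x)-f(y)}\le L\norm{x-y}$ for all $x,y\in X$, which is exactly the claim. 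The whole argument rests on the observation that inserting the vanishing term lets us rewrite a difference of \emph{two} summands as a difference of \emph{one} summand evaluated at the two points.
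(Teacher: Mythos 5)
Your first two cases are fine, but the third case --- the only genuinely nontrivial one, as you yourself note --- contains a false equality, so the proof has a real gap. When $x\in\suppo f_{\ga_1}$ and $y\in\suppo f_{\ga_2}$ with $\ga_1\neq\ga_2$, we have $f(x)-f(y)=f_{\ga_1}(x)-f_{\ga_2}(y)$, and this is \emph{not} equal to $f_{\ga_1}(x)-f_{\ga_1}(y)$: since $f_{\ga_1}(y)=0$, the latter is just $f_{\ga_1}(x)$, so your claimed identity silently drops the nonzero term $-f_{\ga_2}(y)$. (Your second claimed identity drops $f_{\ga_1}(x)$ instead; the two expressions you assert are both equal to $f(x)-f(y)$ are not even equal to each other.) Concretely, take $X=Y=\R$, $f_1(t)=\max\{1-\abs t,0\}$, $f_2(t)=\max\{1-\abs{t-3},0\}$, $x=0$, $y=3$: then $f(x)-f(y)=0$ while $f_1(x)-f_1(y)=1$. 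Inserting zero values reduces the difference to a single summand only when one of the two points has $f$-value zero; here both values are nonzero and come from \emph{different} summands. The most your manipulation yields is the triangle inequality $\norm{f(x)-f(y)}\le\norm{f_{\ga_1}(x)}+\norm{f_{\ga_2}(y)}\le 2L\norm{x-y}$, i.e.\ the constant $2L$, not $L$.

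The missing idea, which is how the paper argues, is geometric rather than algebraic: $f_{\ga_1}$ and $f_{\ga_2}$ are Lipschitz, hence continuous, so $\suppo f_{\ga_1}$ and $\suppo f_{\ga_2}$ are open; they are disjoint, and the segment $[x,y]$ is connected, so there exists $z\in[x,y]$ lying in neither support. Then $f_{\ga_1}(z)=f_{\ga_2}(z)=0$, hence
\[
\norm{f(x)-f(y)}=\normb{big}{\bigl(f_{\ga_1}(x)-f_{\ga_1}(z)\bigr)+\bigl(f_{\ga_2}(z)-f_{\ga_2}(y)\bigr)}\le L\norm{x-z}+L\norm{z-y}=L\norm{x-y},
\]
the final equality because $z$ lies on the segment from $x$ to $y$. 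Note that some such use of the convexity of $X$ is unavoidable, not a technicality: on the two-point metric space $\{a,b\}$ with $d(a,b)=1$, the functions $f_1=\mathbf{1}_{\{a\}}$ and $f_2=-\mathbf{1}_{\{b\}}$ are $1$-Lipschitz with disjoint supports, yet $\abs{f(a)-f(b)}=2$; so the constant $L$ (rather than $2L$) genuinely depends on $X$ being a normed space, and no purely pointwise bookkeeping of the kind you attempt can prove it.
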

\begin{proof}
Note that the sum is pointwise finite and so the mapping $f$ is well-defined.
Pick $x,y\in X$.
In case that $f(x)=0$, let $\ga\in\Ga$ be such that $f(y)=f_\ga(y)$.
Then $f_\ga(x)=0$ and hence $\norm{f(x)-f(y)}=\norm{f_\ga(x)-f_\ga(y)}\le L\norm{x-y}$.
Now suppose that $x\in\suppo f_\alpha$, $y\in\suppo f_\beta$ for some $\alpha,\beta\in\Ga$.
Since the line segment $[x,y]$ is connected and $\suppo f_\alpha$, $\suppo f_\beta$ are open and disjoint,
there is $z\in[x,y]\setminus(\suppo f_\alpha\cup\suppo f_\beta)$.
Then
\[\begin{split}
\norm{f(x)-f(y)}&=\norm{f_\alpha(x)-f_\beta(y)}=\norm{f_\alpha(x)-f_\alpha(z)+f_\beta(z)-f_\beta(y)}\le\norm{f_\alpha(x)-f_\alpha(z)}+\norm{f_\beta(z)-f_\beta(y)}\\
&\le L\norm{x-z}+L\norm{z-y}=L\norm{x-y}.
\end{split}\]
\end{proof}

\begin{lemma}\label{l:seq-isometry}
Let $(M,\rho)$ be a metric space such that there are sequences $\{x_n\}\subset M$ and $\{r_n\}\subset[0,+\infty)$ satisfying
$\rho(x_m,x_n)\ge r_m+r_n$ for all $m,n\in\N$, $m\neq n$, and
\[
\lim_{n\to\infty}\frac{r_{2n}+r_{2n+1}}{\rho(x_{2n},x_{2n+1})}=1.
\]
Then $\Lip_0(M)$ contains a subspace isometric to~$\ell_\infty$.

If moreover even $r_{2n}+r_{2n+1}=\rho(x_{2n},x_{2n+1})$ for each $n\in\N$, then $\mc F(M)$ contains a $1$-complemented subspace isometric to~$\ell_1$.
\end{lemma}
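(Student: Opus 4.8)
The plan is to manufacture explicit bump functions supported on the balls $U(x_n,r_n)$ and to paste them together by means of Lemma~\ref{l:sum-lip}. For each $n\in\N$ I put $\phi_n(y)=\max\{r_n-\rho(x_n,y),0\}$, which is $1$-Lipschitz, satisfies $\phi_n(x_n)=r_n$, and has $\suppo\phi_n=U(x_n,r_n)$. The assumption $\rho(x_m,x_n)\ge r_m+r_n$ for $m\ne n$ forces the balls $U(x_n,r_n)$, and hence the supports $\suppo\phi_n$, to be pairwise disjoint. Pairing the points, I set $g_n=\phi_{2n}-\phi_{2n+1}$; since $\phi_{2n}$ and $-\phi_{2n+1}$ are $1$-Lipschitz with disjoint supports, Lemma~\ref{l:sum-lip} shows that each $g_n$ is $1$-Lipschitz, that $\{\suppo g_n\}_n$ is disjoint, and that $g_n(x_{2n})=r_{2n}$, $g_n(x_{2n+1})=-r_{2n+1}$. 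It will be convenient to note once and for all that any Lipschitz function $f$ produced below may be replaced by $f-f(0)$ so as to lie in $\Lip_0(M)$: this leaves the Lipschitz constant unchanged, is linear in the data defining $f$, and does not affect pairings against differences $\de_x-\de_y$; I shall use this normalization tacitly.

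For the first assertion write $a_n=\frac{r_{2n}+r_{2n+1}}{\rho(x_{2n},x_{2n+1})}$, so $a_n\le1$ and $a_n\to1$ by hypothesis. I partition $\N$ into infinitely many infinite sets $\{A_k\}_k$ and, for $c=(c_k)\in\ell_\infty$, define $S(c)=\sum_n c_{k(n)}g_n$, where $k(n)$ denotes the index with $n\in A_k$. Each summand $c_{k(n)}g_n$ is $\norm{c}_\infty$-Lipschitz and the supports are disjoint in $n$, so Lemma~\ref{l:sum-lip} yields $\norm{S(c)}_{\Lip}\le\norm{c}_\infty$. For the reverse bound I compute the difference quotient of $S(c)$ across the pair $(x_{2n},x_{2n+1})$, which equals $\abs{c_{k(n)}}a_n$; since each $A_k$ is infinite and $a_n\to1$, one has $\sup_{n\in A_k}a_n=1$, and therefore $\norm{S(c)}_{\Lip}\ge\sup_k\abs{c_k}\sup_{n\in A_k}a_n=\norm{c}_\infty$. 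Thus $S$ is a linear isometry of $\ell_\infty$ into $\Lip_0(M)$. This distribution of each single $\ell_\infty$-coordinate over an entire infinite block of pairs is the crucial device: it is exactly what promotes the mere limit $a_n\to1$ into an exact supremum equal to $1$, and I expect this to be the main obstacle to an isometric (rather than almost isometric) embedding.

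For the second assertion I assume the stronger hypothesis $r_{2n}+r_{2n+1}=\rho(x_{2n},x_{2n+1})$, so that $a_n=1$ for every $n$. Set $u_n=\frac{\de_{x_{2n}}-\de_{x_{2n+1}}}{\rho(x_{2n},x_{2n+1})}\in\mc F(M)$, so $\norm{u_n}=1$, and define $R(b)=\sum_n b_n u_n$ for $b=(b_n)\in\ell_1$. The triangle inequality gives $\norm{R(b)}\le\norm{b}_1$. For the matching lower bound I test against $f=\sum_n\sgn(b_n)g_n$, which is $1$-Lipschitz by Lemma~\ref{l:sum-lip}; using $a_n=1$ one gets $\ev{u_n}{f}=\frac{f(x_{2n})-f(x_{2n+1})}{\rho(x_{2n},x_{2n+1})}=\sgn(b_n)$, whence $\ev{R(b)}{f}=\norm{b}_1$ and so $\norm{R(b)}=\norm{b}_1$. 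Consequently $R$ is an isometry and $Y=R(\ell_1)=\cspan\{u_n\}$ is isometric to~$\ell_1$.

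It remains to build a norm-one projection onto $Y$, and here I use the very functions $g_n$ as coordinate functionals. The same evaluation as above, together with the disjointness of the supports, gives the biorthogonality $\ev{u_m}{g_n}=\de_{mn}$. I then set $P(z)=\sum_n\ev{z}{g_n}u_n$. The delicate point is the estimate $\norm{P}\le1$: for a finitely supported $z$ I test against $h=\sum_n\sgn(\ev{z}{g_n})g_n$ (again $1$-Lipschitz by Lemma~\ref{l:sum-lip}) to obtain $\sum_n\abs{\ev{z}{g_n}}=\ev{z}{h}\le\norm{z}$, and a routine density argument extends this to all $z\in\mc F(M)$; since $R$ is an isometry this reads $\norm{P(z)}=\sum_n\abs{\ev{z}{g_n}}\le\norm{z}$. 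Biorthogonality gives $Pu_m=u_m$, so $P$ is a projection of $\mc F(M)$ onto $Y$ with $\norm{P}=1$. Besides the exact-supremum trick of the first part, this verification that the candidate projection is a contraction is the second place I expect to require genuine care.
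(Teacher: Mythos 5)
Your construction is in substance the paper's: the same tent functions $\max\{r_n-\rho(x_n,\cdot),0\}$, the same device of spreading each coordinate of $(c_k)\in\ell_\infty$ over an infinite block of pairs so that the hypothesis $a_n\to1$ becomes an exact supremum equal to $1$, and the same sign-pattern test functions for the $\ell_1$ lower bound. Where you genuinely diverge is the projection: you define $P(z)=\sum_n\ev{z}{g_n}u_n$ directly and bound its norm by testing against $\sum_n\sgn(\ev{z}{g_n})g_n$ plus a density argument, whereas the paper linearises the $1$-Lipschitz map $x\mapsto\sum_n g_n(x)u_n$ (in your notation), defined on all of $\mc F(M)$, through the universal property of $\mc F(M)$, which yields $\norm P\le1$ with no density or weak$^*$ considerations. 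Your route is a legitimate alternative, and the density step you call routine is indeed routine --- provided the Lipschitz estimates underneath it are justified, which brings me to the real problem.

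Every Lipschitz upper bound in your proof (for $g_n$, for $S(c)$, for $f=\sum_n\sgn(b_n)g_n$, and for $h$) is justified by applying Lemma~\ref{l:sum-lip} to functions defined on the metric space $M$. The lemma does not apply there: it is stated for maps between normed linear spaces, and its proof hinges on the connectedness of the segment $[x,y]$, which produces a point outside both supports. On a general metric space the corresponding statement is false, and it fails even for your own tent functions: take $M=\{x_1,x_2\}$ with $\rho(x_1,x_2)=1$ and $r_1=r_2=1$; then $U(x_1,1)=\{x_1\}$ and $U(x_2,1)=\{x_2\}$ are disjoint and each $\phi_i$ is $1$-Lipschitz, but $\phi_1-\phi_2$ takes the values $1$ and $-1$, so its Lipschitz constant is $2$. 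The point is that disjointness of the supports --- which is all you feed into the lemma --- is strictly weaker than the separation hypothesis $\rho(x_m,x_n)\ge r_m+r_n$, and only the latter makes your sums $1$-Lipschitz on $M$. The gap can be repaired in two ways. One is the paper's device, which exists precisely for this reason: regard $M$ as a subset of $X=\mc F(M)$ via $\de$, define $g_n(x)=\max\{r_n-\norm{x-\de_{x_n}},0\}$ on all of $X$, apply Lemma~\ref{l:sum-lip} there, and restrict to $M$ at the very end. The other is a direct estimate on $M$ that actually uses the separation: if $u\in U(x_i,r_i)$, $v\in U(x_j,r_j)$, $i\neq j$, and $f=\sum_n\ve_n\phi_n$ with $\abs{\ve_n}\le1$, then
\[
\abs{f(u)-f(v)}\le\phi_i(u)+\phi_j(v)=\bigl(r_i-\rho(x_i,u)\bigr)+\bigl(r_j-\rho(x_j,v)\bigr)\le\rho(x_i,x_j)-\rho(x_i,u)-\rho(x_j,v)\le\rho(u,v),
\]
with the cases where $u$ or $v$ lies in no ball handled similarly (e.g. $\abs{f(u)-f(v)}=\abs{\ve_i}\phi_i(u)\le\rho(x_i,v)-\rho(x_i,u)\le\rho(u,v)$). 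Once one of these repairs is made, the rest of your argument (the exact-supremum computation, the biorthogonality $\ev{u_m}{g_n}=\de_{mn}$, and the extension of $\sum_n\abs{\ev{z}{g_n}}\le\norm z$ from finitely supported $z$ to all of $\mc F(M)$) is correct.
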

\begin{proof}
We may assume that $0=x_1$.
We consider $M$ as a metric subspace of $X=\mc F(M)$.
Set $g_n(x)=\max\{r_n-\norm{x-x_n},0\}$ for $x\in X$, $n\in\N$.
The functions $g_n$ are clearly $1$-Lipschitz, $\{\suppo g_n\}_{n\in\N}=\{U(x_n,r_n)\}_{n\in\N}$ is a disjoint collection, and $g_n(0)=0$ for each $n>1$.
Further, let $\{n^k_l\}_{l=1}^\infty\subset\N$, $k\in\N$ be disjoint increasing sequences.
Finally, we set $f_k=\sum_{l=1}^\infty g_{2n^k_l}-g_{2n^k_l+1}$ in the pointwise convergence on~$X$.
By Lemma~\ref{l:sum-lip}, each $f_k$, $k\in\N$, is a $1$-Lipschitz function.

Now let $(a_k)\in\ell_\infty$ and consider $g=\sum_{k=1}^\infty a_kf_k$ in the pointwise convergence on~$X$.
Then $g$ is $\norm{(a_k)}$-Lipschitz by Lemma~\ref{l:sum-lip} and clearly the same holds for $h=g\restr M$.
Hence $h\in\Lip_0(M)$ and $\norm h\le\norm{(a_k)}$.
On the other hand,
\[\begin{split}
\norm h&\ge\sup_{n\in\N}\frac{\abs{h(x_{2n})-h(x_{2n+1})}}{\rho(x_{2n},x_{2n+1})}=\sup_{n\in\N}\frac{\abs{g(x_{2n})-g(x_{2n+1})}}{\rho(x_{2n},x_{2n+1})}\ge\sup_{k,l\in\N}\frac{\absb{big}{g(x_{2n^k_l})-g(x_{2n^k_l+1})}}{\rho(x_{2n^k_l},x_{2n^k_l+1})}\\
&=\sup_{k,l\in\N}\frac{\absb{big}{a_kr_{2n^k_l}+a_kr_{2n^k_l+1}}}{\rho(x_{2n^k_l},x_{2n^k_l+1})}=\sup_{k\in\N}\abs{a_k}\sup_{l\in\N}\frac{r_{2n^k_l}+r_{2n^k_l+1}}{\rho(x_{2n^k_l},x_{2n^k_l+1})}=\sup_{k\in\N}\abs{a_k}=\norm{(a_k)}.
\end{split}\]
The mapping $(a_k)\mapsto h$ described above is therefore a linear isometry from $\ell_\infty$ into $\Lip_0(M)$.

For the moreover part it suffices to take only $f_n=g_{2n}-g_{2n+1}$.
Further, we set $e_n=\frac{\de_{x_{2n}}-\de_{x_{2n+1}}}{\rho(x_{2n},x_{2n+1})}$.
It is easy to see that $\{(e_n;f_n\restr M)\}$ is a bi-normalised biorthogonal system.
Let us verify that $\{e_n\}$ is $1$-equivalent to the canonical basis of $\ell_1$.
Pick any $a_1,\dotsc,a_N\in\R$ and consider $x=\sum_{n=1}^N a_ne_n$.
Then clearly $\norm x\le\sum_{n=1}^N\abs{a_n}$.
On the other hand, put $g=\sum_{n=1}^N\sgn a_n\cdot f_n\restr M$.
By the above we have $g\in\Lip_0(M)$ and $\norm g\le1$.
Thus, $\norm x\ge\abs{\ev gx}=\absb{big}{\sum_{n=1}^N a_n\ev g{e_n}}=\sum_{n=1}^N\abs{a_n}$.

To finish the proof it remains to find a projection of norm $1$ from $X$ onto $\cspan\{e_n\setsep n\in\N\}$.
Define $r\colon X\to\spn\{e_n\setsep n\in\N\}$ by $r(x)=\sum_{n=1}^\infty f_n(x)e_n$.
Then $r$ is a $1$-Lipschitz mapping by Lemma~\ref{l:sum-lip}.
By the universal property of $X=\mc F(M)$ there is a linear operator $P\colon X\to\cspan\{e_n\setsep n\in\N\}$ such that $P\comp\de=r\restr M$ and $\norm P\le 1$.
In order to see that $P$ is a projection onto $\cspan\{e_n\setsep n\in\N\}$ observe that $P(e_n)=e_n$ for every $n\in\N$:
\[\begin{split}
P(e_n)&=\frac{P(\de_{x_{2n}})-P(\de_{x_{2n+1}})}{\rho(x_{2n},x_{2n+1})}=\frac{r(x_{2n})-r(x_{2n+1})}{\rho(x_{2n},x_{2n+1})}=\frac{f_n(x_{2n})e_n-f_n(x_{2n+1})e_n}{\rho(x_{2n},x_{2n+1})}\\
&=\frac{g_{2n}(x_{2n})e_n+g_{2n+1}(x_{2n+1})e_n}{\rho(x_{2n},x_{2n+1})}=\frac{r_{2n}+r_{2n+1}}{\rho(x_{2n},x_{2n+1})}e_n=e_n.
\end{split}\]
\end{proof}

\begin{theorem}\label{t:linfty}
Let $M$ be an infinite metric space.
Then $\Lip_0(M)$ contains a subspace isometric to $\ell_\infty$.
If moreover the completion of $M$ has an accumulation point, then $\mc F(M)$ contains a $1$-complemented subspace isometric to~$\ell_1$.
\end{theorem}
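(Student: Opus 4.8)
The plan is to reduce both assertions to Lemma~\ref{l:seq-isometry}: I must produce sequences $\{x_n\}\subseteq M$ and $\{r_n\}\subseteq[0,+\infty)$ with $\rho(x_m,x_n)\ge r_m+r_n$ for all $m\neq n$ (i.e.\ pairwise disjoint balls $U(x_n,r_n)$) such that $\frac{r_{2n}+r_{2n+1}}{\rho(x_{2n},x_{2n+1})}\to1$, and for the moreover part I need in addition the exact equality $r_{2n}+r_{2n+1}=\rho(x_{2n},x_{2n+1})$. By the reductions from the introduction I assume $M$ complete and split according to whether $M$ has an accumulation point. If it has none, then no sequence of distinct points has a convergent, hence Cauchy, subsequence; the underlying set is therefore not totally bounded and I can extract an $\eta$-separated sequence $\{w_k\}$ (all distances $\ge\eta>0$). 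If $M$ has an accumulation point $p$, I will obtain exact equality and so settle both conclusions at once.

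For the accumulation case I run a dichotomy on the geometry near $p$. Either (ii) there is $\ve_0>0$ with $\rho(a,b)=\rho(a,p)+\rho(b,p)$ for all $a,b\in U(p,\ve_0)$ (so $p$ lies metrically between every nearby pair), or (i) every ball $U(p,\ve)$ contains a pair with positive ``room'' $\rho(a,p)+\rho(b,p)-\rho(a,b)>0$. In case (ii) I take any $x_k\to p$ inside $U(p,\ve_0)$ and put $r_k=\rho(x_k,p)$; then $\rho(x_m,x_n)=r_m+r_n$, the balls are disjoint (they touch), and $r_{2n}+r_{2n+1}=\rho(x_{2n},x_{2n+1})$ for free. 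In case (i) I choose the pairs recursively at rapidly decreasing scales: given finitely many pairs, I pick the next pair $(a,b)$ deep inside a tiny ball about $p$ with room $>0$ and split its weight proportionally, $r_a=\rho(a,b)\frac{\rho(a,p)}{\rho(a,p)+\rho(b,p)}$ and symmetrically for $r_b$, so that $r_a+r_b=\rho(a,b)$. The two balls of this pair touch, hence are disjoint, and each lies in the shell $\{q:\abs{\rho(q,p)-\rho(a,p)}<r_a\}$ about $p$, whose inner radius $\rho(a,p)-r_a$ equals $\rho(a,p)$ times the relative room, hence is positive; if each subsequent pair is taken inside a ball about $p$ so small that its balls lie closer to $p$ than all previously chosen inner radii, the shells of distinct pairs are disjoint. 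This yields pairwise disjoint balls with ratio exactly $1$.

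In the no-accumulation case only ratio $\to1$ is needed, and here I take $r_k=\tfrac12\inf_{j\neq k}\rho(x_j,x_k)$, half the distance from $x_k$ to the nearest other chosen centre; this forces $r_m+r_n\le\rho(x_m,x_n)$ for all $m\neq n$, so disjointness holds for any choice of centres. It remains to pick the centres and the pairing so that each point's partner is asymptotically its nearest chosen centre, whence $r_{2n}+r_{2n+1}\to\rho(x_{2n},x_{2n+1})$. I extract these from $\{w_k\}$ by a Ramsey-type argument: if $\{w_k\}$ is bounded I pass (discretising the finitely many distance-scales and diagonalising) to a subsequence whose pairwise distances converge to a single value, so paired points are asymptotically mutual nearest neighbours; if $\{w_k\}$ is unbounded I select pairs, each a point together with its global nearest neighbour, placed at scales so widely separated that the partner is the genuine nearest centre. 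Either way the ratio tends to $1$ and $\ell_\infty$ embeds into $\Lip_0(M)$.

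The main obstacle is the tension built into Lemma~\ref{l:seq-isometry}: a ratio near $1$ forces the two balls of a pair to nearly fill the gap between their centres (large radii), while global disjointness forces small radii. In the accumulation case this tension is exactly whether $p$ lies metrically between the paired points, which is why I separate the additive case (balls reaching all the way to $p$ and touching the tail) from the case with positive room (balls confined to thin shells); verifying disjointness for the proportional/shell radii—ensuring that the ball around the point nearest $p$ does not swallow the tail accumulating at $p$—is the technical core. The analogous difficulty in the no-accumulation case is the extraction guaranteeing that paired points become mutual nearest neighbours.
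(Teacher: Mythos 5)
Your accumulation-point case is correct, and it takes a genuinely different route from the paper. The paper dichotomizes along a convergent sequence (after passing to a subsequence, either $\rho(x_m,x_n)=\rho(x_m,x_1)+\rho(x_n,x_1)$ for all $m,n>1$, or every pair $(x_{2n},x_{2n+1})$ has positive room, handled by making the tail much closer to the limit than the rooms $\de_n$), whereas you dichotomize locally at $p$ and, in the non-additive case, split each pair's distance proportionally and separate distinct pairs by shells. I checked the core step: if each new pair is chosen so that its outer shell radius $\rho(y,p)+r_y$ is smaller than every previously recorded inner radius $\rho(x,p)-r_x$, then the reverse triangle inequality gives $\rho(x,y)\ge\rho(x,p)-\rho(y,p)>r_x+r_y$, so Lemma~\ref{l:seq-isometry} applies with exact equality inside pairs. (Two small points: state the dichotomy for \emph{distinct} $a,b\neq p$, otherwise case (ii) is vacuously false; and reserve $x_1=p$, $r_1=0$ as the unpaired first term of the sequence.)

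The no-accumulation case, however, contains a genuine gap, and it is not a fixable detail: the prescription $r_k=\frac12\inf_{j\neq k}\rho(x_j,x_k)$ together with the requirement that ``each point's partner is asymptotically its nearest chosen centre'' cannot be realized in general. The paper's own Remark~\ref{r:unknown_ex} provides counterexamples. For your bounded subcase take $M=\{x_n\}$ with $\rho(x_k,x_n)=2-\frac1k$ for $n>k$: this is bounded, complete, uniformly separated, and every subsequence $\{x_{k_i}\}$ has the same form, with pairwise distances tending to $d=2$ exactly in the tail sense that a Ramsey/diagonal extraction can deliver. Yet for \emph{any} choice of centres, the nearest centre to every point is the centre $x_{k_1}$ of smallest original index, at distance $2-\frac1{k_1}$; so all your radii are capped at $1-\frac1{2k_1}$, while the distances inside all but finitely many pairs tend to $2$. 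Hence $\frac{r_{2n}+r_{2n+1}}{\rho(x_{2n},x_{2n+1})}\to\frac{2-1/k_1}{2}<1$ no matter how you choose centres and pairing: ``paired points are asymptotically mutual nearest neighbours'' is simply unachievable here. The same phenomenon defeats your unbounded subcase: in $\rho(x_k,x_n)=k+n-\frac1k$ ($n>k$) the nearest centre to every point is again the smallest-indexed centre, so at most one pair can consist of mutual nearest neighbours (and in a general uniformly discrete space a ``global nearest neighbour'' need not even exist).

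The structural flaw is that your rule ties every radius to a single global quantity that one adversarial early point can pin down forever. The paper escapes this by letting the radii depend on the index, with the deficit carried by the \emph{earlier} point. In the bounded case it extracts a subsequence satisfying $d\bigl(1-\frac1{2m}\bigr)<\rho(x_m,x_n)<d\bigl(1+\frac1{2m}\bigr)$ for $n>m$ and sets $r_n=\frac d2\bigl(1-\frac1n\bigr)$; then $r_m+r_n=d\bigl(1-\frac1{2m}-\frac1{2n}\bigr)<d\bigl(1-\frac1{2m}\bigr)<\rho(x_m,x_n)$, while $r_{2n}+r_{2n+1}\to d$ and $\rho(x_{2n},x_{2n+1})<d\bigl(1+\frac1{4n}\bigr)$, so the ratio tends to $1$. (You can verify this works on the example above, where your rule fails.) In the unbounded case it chooses $x_{n+1}$ with $\rho(x_{n+1},x_n)>n\max_{k\le n}\{\rho(x_n,x_k)+r_k\}$ and sets $r_{n+1}=\rho(x_{n+1},x_n)-\max_{k\le n}\{\rho(x_n,x_k)+r_k\}$, so the inequalities hold by construction and the ratio exceeds $1-\frac1{2n}$. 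Your proof of the $\ell_\infty$ statement for spaces without accumulation points needs to be replaced by an argument of this asymmetric kind.
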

\begin{proof}
Let $\rho$ be the metric on $M$.
Without loss of generality we may assume that $M$ is complete.
We distinguish three cases: $M$~has an accumulation point, $M$~contains a bounded uniformly separated sequence, and $M$ is unbounded.
This covers all the possibilities, since if $M$ does not have an accumulation point, then it is not totally bounded.

So suppose first that $M$ has an accumulation point.
Then there is a sequence $\{x_n\}_{n=2}^\infty\subset M$ of distinct points converging to some $x_1\in M$.
By passing to a subsequence we may assume that either $\rho(x_m,x_n)=\rho(x_m,x_1)+\rho(x_n,x_1)$ for all $m,n>1$, $m\neq n$,
or $\rho(x_{2n},x_{2n+1})<\rho(x_{2n},x_1)+\rho(x_{2n+1},x_1)$ for all $n\in\N$.
In the first case we can take $r_n=\rho(x_n,x_1)$.
In the second case we put $\de_n=\frac12\bigl(\rho(x_{2n},x_1)+\rho(x_{2n+1},x_1)-\rho(x_{2n},x_{2n+1})\bigr)>0$.
By passing to a subsequence we may assume that $\rho(x_m,x_1)\le\frac12\de_n$ for all $m,n\in\N$, $m>2n+1$.
We set $r_1=0$, $r_{2n}=\rho(x_{2n},x_1)-\de_n$, and $r_{2n+1}=\rho(x_{2n+1},x_1)-\de_n$.
It is easy to see that in both cases the assumptions of Lemma~\ref{l:seq-isometry} are satisfied.
Indeed, in the second case
\[
r_m+r_{2n+i}<\rho(x_m,x_1)+\rho(x_{2n+i},x_1)-\de_n\le\rho(x_m,x_1)+\rho(x_{2n+i},x_1)-2\rho(x_m,x_1)\le\rho(x_m,x_{2n+i})
\]
for $m>2n+1$ and $i\in\{0,1\}$.

Suppose now that $M$ contains a bounded uniformly separated sequence $\{y_n\}$.
Using the boundedness we construct inductively increasing sequences $\{n^k_l\}_{l=1}^\infty\subset\N$ such that $\{n^{k+1}_l\}$ is a subsequence of $\{n^k_l\}$, $n^{k+1}_1>n^k_1$,
and such that $\lim_{l\to\infty}\rho(y_{n^k_1},y_{n^k_l})=d_k$ for each $k\in\N$.
By our assumption the sequence $\{d_k\}$ is bounded and $\inf d_k>0$, hence there is $d\in(0,+\infty)$ and an increasing sequence $\{k_p\}\subset\N$ such that
$d\bigl(1-\frac1{2p}\bigr)<d_{k_p}<d\bigl(1+\frac1{2p}\bigr)$ for each $p\in\N$.
Finally, we inductively find increasing sequences $\{p_s\}\subset\N$, $\{l_s\}\subset\N$ satisfying $p_1=1$, $d\bigl(1-\frac1{2p_s}\bigr)<\rho(y_{n^{k_{p_s}}_1},y_{n^{k_{p_s}}_l})<d\bigl(1+\frac1{2p_s}\bigr)$ for $l\ge l_s$,
and $n^{k_{p_{s+1}}}_1\ge n^{k_{p_s}}_{l_s}$ for each $s\in\N$.
Now we set $x_s=y_{n^{k_{p_s}}_1}$ for $s\in\N$ and note that $d\bigl(1-\frac1{2m}\bigr)<\rho(x_m,x_n)<d\bigl(1+\frac1{2m}\bigr)$ for all $m,n\in\N$, $n>m$.
Therefore if we set $r_n=\frac d2\bigl(1-\frac1n\bigr)$, then $r_m+r_n=d\bigl(1-\frac1{2m}-\frac1{2n}\bigr)<d\bigl(1-\frac1{2m}\bigr)<\rho(x_m,x_n)$ for $n>m$ and
\[
\frac{r_{2n}+r_{2n+1}}{\rho(x_{2n},x_{2n+1})}=\frac{d\bigl(1-\frac1{4n}-\frac1{2(2n+1)}\bigr)}{\rho(x_{2n},x_{2n+1})}>\frac{d\bigl(1-\frac1{4n}-\frac1{2(2n+1)}\bigr)}{d\bigl(1+\frac1{4n}\bigr)}\to1.
\]
Hence again the assumptions of Lemma~\ref{l:seq-isometry} are satisfied.

Finally, assume that $M$ is unbounded.
We construct inductively sequences $\{x_n\}\subset M$ and $\{r_n\}\subset(0,+\infty)$.
Pick any $x_1\in M$ and $r_1>0$.
In the induction step we find $x_{n+1}\in M$ such that $\rho(x_{n+1},x_n)>n\max\{\rho(x_n,x_k)+r_k\setsep k=1,\dotsc,n\}$ and put $r_{n+1}=\rho(x_{n+1},x_n)-\max\{\rho(x_n,x_k)+r_k\setsep k=1,\dotsc,n\}$.
Then $r_m+r_n=r_m+\rho(x_n,x_{n-1})-\max\{\rho(x_{n-1},x_k)+r_k\setsep 1\le k<n\}\le r_m+\rho(x_n,x_{n-1})-\rho(x_{n-1},x_m)-r_m\le\rho(x_m,x_n)$ for $n>m$ and
\[
\frac{r_{2n}+r_{2n+1}}{\rho(x_{2n},x_{2n+1})}=\frac{r_{2n}+\rho(x_{2n+1},x_{2n})-\max\{\rho(x_{2n},x_k)+r_k\setsep 1\le k\le2n\}}{\rho(x_{2n},x_{2n+1})}>1-\frac1{2n}\to1.
\]
An application of Lemma~\ref{l:seq-isometry} finishes the proof.
\end{proof}

Theorem~\ref{t:linfty} has the following corollary.

\begin{corollary}\label{c:fpp}
Let $M$ be an infinite metric space.
Then $\mc F(M)$ does not have the fixed point property.
\end{corollary}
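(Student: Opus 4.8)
The plan is to obtain the corollary as a short consequence of the first part of Theorem~\ref{t:linfty} combined with two standard results from metric fixed point theory. The crucial point is that Theorem~\ref{t:linfty} guarantees, for \emph{every} infinite metric space $M$, an \emph{isometric} (not merely isomorphic) copy of $\ell_\infty$ inside the dual $\mc F(M)^*=\Lip_0(M)$, and this isometric nature is exactly what the fixed point machinery consumes.

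Concretely, I would argue in three steps. First, by the first assertion of Theorem~\ref{t:linfty}, $\mc F(M)^*$ contains a subspace isometric to $\ell_\infty$. Second, I would invoke the duality result of Dowling, Lennard and Turett: a Banach space $X$ contains an asymptotically isometric copy of $\ell_1$ if and only if $X^*$ contains an isometric copy of $\ell_\infty$; applied with $X=\mc F(M)$ this produces an asymptotically isometric copy of $\ell_1$ inside $\mc F(M)$. Third, I would apply the theorem of Dowling and Lennard that any Banach space containing an asymptotically isometric copy of $\ell_1$ fails the fixed point property. Chaining these three facts yields the claim.

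The reason for routing the argument through the dual is that the isometric copy of $\ell_\infty$ in $\mc F(M)^*$ is the only piece of information available for every infinite metric space $M$. If the completion of $M$ has an accumulation point, then the second part of Theorem~\ref{t:linfty} already provides a $1$-complemented isometric copy of $\ell_1$ in $\mc F(M)$; since $\ell_1$ fails the fixed point property and this property passes to closed subspaces, the conclusion would follow immediately in that case. For uniformly discrete $M$, however, it is not even known whether $\ell_1$ embeds isometrically into $\mc F(M)$ (Question~\ref{q:discrete}), so one cannot produce an $\ell_1$ copy directly; the asymptotically isometric copy furnished by the dual is the substitute, and it is enough by the Dowling--Lennard theorem quoted above. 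I therefore expect the only genuine subtlety to be conceptual, namely the recognition that the dual information suffices uniformly; all the quantitative work has already been carried out in Theorem~\ref{t:linfty}, so the proof of the corollary itself requires no new estimates.
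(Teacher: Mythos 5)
Your proposal is correct and follows essentially the same route as the paper: the paper's proof likewise chains the isometric copy of $\ell_\infty$ in $\mc F(M)^*$ from Theorem~\ref{t:linfty}, the duality result giving an asymptotically isometric copy of $\ell_1$ in $\mc F(M)$ (cited there as \cite{DGH} or Section~\ref{sec:l1}), and the Dowling--Lennard--Turett fixed point theorem \cite[Theorem~2.3]{DLT}. Your remarks on why the dual route is the only one available uniformly (because of Question~\ref{q:discrete}) match the paper's motivation exactly; only your attributions of the intermediate duality result differ slightly from the paper's citations, which is immaterial.
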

\begin{proof}
Theorem~\ref{t:linfty} implies that $\mc F(M)^*$ has a subspace isometric to $\ell_\infty$.
It follows that $\mc F(M)$ has a subspace asymptotically isometric to $\ell_1$, see \cite{DGH} or Section~\ref{sec:l1}.
Consequently, $\mc F(M)$ does not have the fixed point property by~\cite[Theorem~2.3]{DLT}.
\end{proof}

\section{\texorpdfstring{Isometric embedding of $\ell_1$ into $\mc F(M)$}{Isometric embedding of l_1 into F(M)}}

In order to prove Theorem~\ref{t:main}, thanks to Theorem~\ref{t:linfty} it remains to consider the ultrametric case (Proposition~\ref{p:ultrametric}).
We embed $\ell_1$ using Lemma~\ref{l:seq-isometry} again.
Further, we show that Lipschitz-free space over a metric space of cardinality at least $3$ is never rotund (Proposition~\ref{p:notRotund}).
This follows from Lemma~\ref{l:twoPoints}, where we compute $\norm{a\de_x+b\de_y}$ for every $x,y\in M$ and $a,b\in\R$ in a general Lipschitz-free space.

First we turn our attention to the result concerning the ultrametric spaces.
In the case that $M$ is unbounded we will use the following lemma, whose idea is the same as in the case of convergent sequence in the proof of Theorem~\ref{t:linfty}.
\begin{lemma}\label{l:unboundedDelta}
Let $(M,\rho)$ be a metric space such that there is a sequence $\{x_n\}\subset M$ with
\[
\rho(x_{2n},x_1)+\rho(x_{2n+1},x_1)-\rho(x_{2n},x_{2n+1})\to\infty.
\]
Then $\mc F(M)$ contains a $1$-complemented subspace isometric to~$\ell_1$.
\end{lemma}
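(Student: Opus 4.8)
The plan is to reduce everything to the moreover part of Lemma~\ref{l:seq-isometry}, so the entire task is to extract from $\{x_n\}$ a subsequence of pairs together with radii $\{r_n\}\subset[0,+\infty)$ satisfying $\rho(x_m,x_n)\ge r_m+r_n$ for all $m\neq n$ and, crucially, the exact equality $r_{2n}+r_{2n+1}=\rho(x_{2n},x_{2n+1})$. Keeping $x_1$ as the base point, I mimic the convergent-sequence case of Theorem~\ref{t:linfty}: I set $\de_n=\frac12\bigl(\rho(x_{2n},x_1)+\rho(x_{2n+1},x_1)-\rho(x_{2n},x_{2n+1})\bigr)$, which is nonnegative by the triangle inequality and tends to $+\infty$ by hypothesis, and I prescribe $r_1=0$, $r_{2n}=\rho(x_{2n},x_1)-\de_n$, and $r_{2n+1}=\rho(x_{2n+1},x_1)-\de_n$. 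With this substitution the equality $r_{2n}+r_{2n+1}=\rho(x_{2n},x_{2n+1})$ holds identically, and $r_{2n},r_{2n+1}\ge0$ again follows from the triangle inequality, so both the nonnegativity and the moreover hypothesis come for free.

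The content is therefore the separation inequality $\rho(x_m,x_n)\ge r_m+r_n$. For pairs involving the base it is immediate since $r_m\le\rho(x_m,x_1)$, and for a matched pair $(2n,2n+1)$ it is the equality just noted. The only remaining case is a cross pair made of a point $x_a$ from one pair and a point $x_b$ from another, say with $a$ in pair $n$ and $b$ in pair $n'$, $n<n'$. Writing $r_a+r_b=\rho(x_a,x_1)+\rho(x_b,x_1)-\de_n-\de_{n'}$, the desired inequality $r_a+r_b\le\rho(x_a,x_b)$ is equivalent to bounding the defect $\rho(x_a,x_1)+\rho(x_b,x_1)-\rho(x_a,x_b)$ by $\de_n+\de_{n'}$. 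The key observation is that, by the reverse triangle inequality, this defect is at most $2\min\{\rho(x_a,x_1),\rho(x_b,x_1)\}$, hence at most $2\rho(x_a,x_1)$, twice the distance to the base of the point lying in the earlier pair.

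This is exactly where the unbounded case departs from the convergent one: there the later points cluster near the base, whereas here they run off to infinity, so I cannot make the later point small. Instead I exploit $\de_n\to\infty$ and select the subsequence of pairs inductively so that each newly chosen $\de$ dominates twice the base-distances of all previously chosen points; concretely, having selected pairs $P_1,\dots,P_{s-1}$, I pick the next pair $P_s$ (possible since $\de_n\to\infty$) whose $\de$-value is at least $2\max\{\rho(p,x_1)\setsep p\in P_1\cup\dots\cup P_{s-1}\}$. After relabelling the selected pairs consecutively, a cross pair with $a$ in the earlier and $b$ in the later pair satisfies defect $\le2\rho(x_a,x_1)\le\de_{n'}\le\de_n+\de_{n'}$, so the separation inequality holds for every pair. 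An application of Lemma~\ref{l:seq-isometry} then delivers the $1$-complemented subspace of $\mc F(M)$ isometric to $\ell_1$.

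The main obstacle is precisely this cross-pair estimate. Once one notices that the defect is controlled by twice the base-distance of the \emph{earlier} point, the induction making $\de$ grow fast enough to absorb it is routine; all the rest (nonnegativity of the radii, the exact equality, and the base-point pairs) is automatic from the $\de$-substitution.
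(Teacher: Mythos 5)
Your proposal is correct and follows essentially the same route as the paper's proof: the same substitution $r_1=0$, $r_{2n}=\rho(x_{2n},x_1)-\de_n$, $r_{2n+1}=\rho(x_{2n+1},x_1)-\de_n$, and the same inductive extraction of a subsequence of pairs whose $\de$-values dominate twice the base-distances of all previously chosen points, with the cross-pair estimate then following from the (reverse) triangle inequality exactly as in the paper. No gaps; this matches the published argument.
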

\begin{proof}
Put $\de_n=\frac12\bigl(\rho(x_{2n},x_1)+\rho(x_{2n+1},x_1)-\rho(x_{2n},x_{2n+1})\bigr)$ for $n\in\N$.
By passing to a subsequence we may assume that $\de_m\ge2\rho(x_n,x_1)$ for all $m,n\in\N$, $n<2m$.
We set $r_1=0$, $r_{2n}=\rho(x_{2n},x_1)-\de_n$, and $r_{2n+1}=\rho(x_{2n+1},x_1)-\de_n$.
Now it is easy to see that the assumptions of Lemma~\ref{l:seq-isometry} are satisfied.
Indeed,
\[
r_{2m+i}+r_n\le\rho(x_{2m+i},x_1)-\de_m+\rho(x_n,x_1)\le\rho(x_{2m+i},x_1)-2\rho(x_n,x_1)+\rho(x_n,x_1)\le\rho(x_{2m+i},x_n)
\]
for $2m>n$ and $i\in\{0,1\}$.
\end{proof}

\begin{lemma}\label{l:inc-dec}
Let $(M,\rho)$ be a metric space such that there is a bounded sequence $\{x_n\}\subset M$ of distinct points with the following properties:
$\{\rho(x_k,x_n)\}_{n=k+1}^\infty$ is non-decreasing for each $k\in\N$, $\lim_{n\to\infty}\rho(x_k,x_n)=d_k$, and $\{d_k\}$ is non-increasing.
Then $\mc F(M)$ contains a $1$-complemented subspace isometric to~$\ell_1$.
\end{lemma}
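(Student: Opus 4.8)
The plan is to pass to a suitable subsequence of $\{x_n\}$ and equip it with radii $\{r_n\}\subset[0,+\infty)$ so that the balls are pairwise disjoint, i.e. $\rho(x_m,x_n)\ge r_m+r_n$ for $m\neq n$, and the pairs are tangent, $r_{2n}+r_{2n+1}=\rho(x_{2n},x_{2n+1})$; then the moreover part of Lemma~\ref{l:seq-isometry} finishes the proof. As in the preliminary section I may assume $M$ is complete. Write $d=\lim_k d_k=\inf_k d_k$. If $d=0$, then from $\rho(x_m,x_n)\le d_{\min(m,n)}\to0$ the sequence $\{x_n\}$ is Cauchy, hence converges to an accumulation point of $M$, and the accumulation-point part of Theorem~\ref{t:linfty} already yields a $1$-complemented isometric copy of $\ell_1$. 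So I assume $d>0$.

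The one structural fact I would exploit is the monotonicity of the rows: for $i<j$ one has $\rho(x_i,x_j)\ge\rho(x_i,x_{i+1})$, so a point is never closer to a later point than to its immediate successor. This lets me control all separation inequalities through the consecutive distances alone, since $r_i+r_j\le\rho(x_i,x_{i+1})\le\rho(x_i,x_j)$ once $r_j$ is kept below a suitable tail-supremum of the radii. Passing first to a subsequence I may moreover assume $\rho(x_i,x_j)\ge d_i-\ve_i$ for all $j>i$, with $\ve_i\downarrow0$ prescribed as small as needed (this only uses that each row increases to its limit $d_i$). It then remains to choose the radii, and here I would split according to whether $\{d_k\}$ is eventually constant. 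If it is not, thin the sequence so that $\{d_k\}$ is strictly decreasing with definite gaps; then the pair distances $\rho(x_{2n},x_{2n+1})$ become non-increasing in $n$, I may look for non-increasing radii, and the separation collapses (by row-monotonicity) to the tangency equalities together with the inter-pair inequalities $r_{2n+1}+r_{2n+2}\le\rho(x_{2n+1},x_{2n+2})$, which are met with room to spare because the gaps in $\{d_k\}$ leave slack.

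The genuinely delicate case, and the step I expect to be the main obstacle, is the plateau $d_k\equiv d$ for large $k$, where every distance increases to the common value $d$ from below. Here the pair sums tend \emph{up} to $d$ and so cannot be realized by non-increasing radii: any attempt to keep the second point of each pair dominating the tail of the radii forces the inter-pair sums to exceed the inter-pair distances, so the tangency constraints collide asymptotically with the separation constraints. I would break this deadlock using the freedom in the subsequence: choose the points so that the \emph{gaps} $\rho(x_{2n+1},x_{2n+2})$ lie far closer to $d$ than the within-pair distances $\rho(x_{2n},x_{2n+1})$—say the gap deficit $d-\rho(x_{2n+1},x_{2n+2})$ is at most half the pair deficit $d-\rho(x_{2n},x_{2n+1})$—while all remaining cross-distances are within the tiny $\ve_j$ of $d$. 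Writing $\de_i=d-\rho(x_i,x_{i+1})$ and seeking radii $r_i=\tfrac d2-t_i$, tangency becomes $t_{2n}+t_{2n+1}=\de_{2n}$, and I would take $t_{2n+1}=\de_{2n+1}$ and $t_{2n}=\de_{2n}-\de_{2n+1}\ge0$. Row-monotonicity bounds the deficit of any point to all later points by its consecutive deficit, so $t_{2n+1}$ already dominates every forward cross-deficit of $x_{2n+1}$, whereas the forward cross-deficits of $x_{2n}$ are either the tangent one (absorbed by the pair equality) or smaller than the prescribed $\ve_j<\de_{2n}/2$; hence every separation inequality holds. The degenerate sub-case in which all distances equal $d$ is handled directly by $r_i=\tfrac d2$. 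With the radii in hand, disjointness and pair-tangency hold and Lemma~\ref{l:seq-isometry} supplies the $1$-complemented isometric copy of $\ell_1$; the point of the whole argument is that the hypothesis $\{d_k\}$ non-increasing, together with the freedom to thin the sequence, is exactly what creates the asymmetry between ``gaps'' and ``pairs'' that makes exact tangency compatible with strict separation.
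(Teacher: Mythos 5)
Your proposal is correct and follows essentially the same route as the paper's proof: a case split according to the behaviour of $\{d_k\}$ (strictly decreasing, plateau, all mutual distances equal to $d$), a thinning of the sequence that keeps every cross-distance within a controlled fraction of the relevant pair deficits, and explicit radii with exact pair tangency fed into Lemma~\ref{l:seq-isometry}. The differences are only in execution: the paper handles $d=0$ inside the decreasing case rather than deferring to Theorem~\ref{t:linfty}, uses the symmetric split $r_{2n}=r_{2n+1}=\frac12\rho(x_{2n},x_{2n+1})$ in the plateau case where you split the pair deficit asymmetrically, and writes out the decreasing-case radii $r_{2n+1}=\frac12 d_{2n+1}$, $r_{2n}=\rho(x_{2n},x_{2n+1})-\frac12 d_{2n+1}$ whose existence your sketch asserts but leaves implicit.
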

\begin{proof}
Let us denote $d=\lim_{k\to\infty}d_k$.
By passing to a subsequence we may assume that one of the following three cases holds:
a) $\{d_k\}$ is decreasing, or b) $\{d_k\}$ is constant and $\{\rho(x_k,x_n)\}_{n=k+1}^\infty$ are increasing for every $k\in\N$, or c) $\rho(x_k,x_n)=d$ for every $k,n\in\N$, $k\neq n$.

In the case c) we set $r_n=\frac d2$ and apply Lemma~\ref{l:seq-isometry}.

Now consider the case b).
By passing to a further subsequence we may assume that $\rho(x_1,x_2)\ge\frac12d$ and $\rho(x_k,x_{n+1})\ge\frac12d+\frac12\rho(x_{n-1},x_n)$ for $k<n+1$.
We then set $r_1=0$ and $r_{2n}=r_{2n+1}=\frac12\rho(x_{2n},x_{2n+1})$.
Since
\[
r_{2m+i}+r_{2n+j}=\frac12\rho(x_{2m},x_{2m+1})+\frac12\rho(x_{2n},x_{2n+1})\le\frac12d+\frac12\rho(x_{2n},x_{2n+1})\le\rho(x_{2n+j},x_{2n+2})\le\rho(x_{2n+j},x_{2m+i})
\]
for all $m>n$ and $i,j\in\{0,1\}$, it is easy to see that the assumptions of Lemma~\ref{l:seq-isometry} are satisfied.

Finally, we consider the case a).
By passing to a subsequence we may assume that for each $n>k$ the following inequalities hold:
\begin{equation}\label{e:dk-rho-ineq}
d\le d_{k+1}\le\frac34d+\frac14d_k\le\frac12d+\frac12d_k\le\frac14d+\frac34d_k\le\rho(x_k,x_n)\le d_k.
\end{equation}
We set $r_1=0$, $r_{2n}=\rho(x_{2n},x_{2n+1})-\frac12d_{2n+1}$, and $r_{2n+1}=\frac12d_{2n+1}$.
Using~\eqref{e:dk-rho-ineq} we obtain $r_{2m+1}\le r_{2m}\le d_{2m}-\frac12d_{2m+1}\le d_{2m}-\frac12d\le d_{2n+2}-\frac12d$ for all $m>n$.
Hence, using~\eqref{e:dk-rho-ineq} again,
\[
r_{2m+i}+r_{2n}\le d_{2n+2}-\frac12d+\rho(x_{2n},x_{2n+1})-\frac12d_{2n+1}\le\rho(x_{2n},x_{2n+1})\le\rho(x_{2n},x_{2m+i})
\]
and
\[
r_{2m+i}+r_{2n+1}\le d_{2n+2}-\frac12d+\frac12d_{2n+1}\le\frac34d+\frac14d_{2n+1}-\frac12d+\frac12d_{2n+1}=\frac14d+\frac34d_{2n+1}\le\rho(x_{2n+1},x_{2m+i})
\]
for all $m>n$ and $i\in\{0,1\}$.
Also,
\[
r_1+r_{2m+1}\le r_1+r_{2m}\le d_{2m}\le d_2\le\rho(x_1,x_{2m})\le\rho(x_1,x_{2m+1}).
\]
for $m\in\N$.
Thus the assumptions of Lemma~\ref{l:seq-isometry} are satisfied.
\end{proof}

We will use the following property of ultrametric spaces, which is easy to see:
If $x,y,z\in M$ and $\rho(x,y)\neq\rho(y,z)$, then $\rho(x,z)=\max\{\rho(x,y),\rho(y,z)\}$.

\begin{proposition}\label{p:ultrametric}
Suppose that a metric space $(M,\rho)$ contains an infinite ultrametric subspace.
Then the space $\mc F(M)$ contains a $1$\nobreakdash-complemented subspace isometric to~$\ell_1$.
\end{proposition}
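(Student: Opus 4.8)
The plan is to locate, inside the given infinite ultrametric subspace $U\subseteq M$, a sequence of distinct points whose mutual distances satisfy the hypotheses of one of the embedding lemmas already available (Lemma~\ref{l:seq-isometry}, Lemma~\ref{l:unboundedDelta}, or Lemma~\ref{l:inc-dec}). Since these hypotheses involve only the distances among the chosen points, and distances in $U$ agree with those in $M$, any such sequence is simultaneously a sequence in $M$; applying the relevant lemma to $M$ then yields a $1$-complemented subspace of $\mc F(M)$ isometric to~$\ell_1$. Throughout I use the isosceles property recalled before the statement: if $\rho(x,y)\ne\rho(y,z)$ then $\rho(x,z)=\max\{\rho(x,y),\rho(y,z)\}$; equivalently, in every triangle the maximal distance is attained at least twice. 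I split into two cases according to whether $U$ is bounded.

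If $U$ is unbounded, fix $x_1\in U$. Then $\sup_{x\in U}\rho(x_1,x)=\infty$, so I can choose $x_2,x_3,\dots\in U$ with $\rho(x_1,x_n)$ strictly increasing to $\infty$. For each $n$, since $\rho(x_1,x_{2n})<\rho(x_1,x_{2n+1})$, the isosceles property gives $\rho(x_{2n},x_{2n+1})=\rho(x_1,x_{2n+1})$, whence
\[
\rho(x_{2n},x_1)+\rho(x_{2n+1},x_1)-\rho(x_{2n},x_{2n+1})=\rho(x_1,x_{2n})\to\infty.
\]
Thus Lemma~\ref{l:unboundedDelta} applies and finishes this case.

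If $U$ is bounded, the distances among any sequence of its points lie in a fixed interval, and the goal is to reach the hypotheses of Lemma~\ref{l:inc-dec}. First, by a diagonal argument I extract distinct points $\{x_n\}$ for which the tail limits $d_k:=\lim_{n\to\infty}\rho(x_k,x_n)$ all exist. Letting $n\to\infty$ in the triangle $x_k,x_m,x_n$ (with $k<m$) and using the isosceles property, one checks the rigidity facts: if $d_k>d_m$ then $\rho(x_k,x_m)=d_k$, and if $d_k<d_m$ then $\rho(x_k,x_m)=d_m$; in short $\rho(x_k,x_m)=\max\{d_k,d_m\}$ whenever $d_k\ne d_m$, while $\rho(x_k,x_m)\le d_k$ when $d_k=d_m$. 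Passing to a further subsequence I may assume $(d_k)$ is strictly decreasing, strictly increasing, or constant. The strictly increasing case is impossible: the facts would force $\rho(x_k,x_n)=d_n$ for all $n>k$, so $d_k=\lim_n d_n=\sup_n d_n$ for every $k$, contradicting strict monotonicity. In the strictly decreasing case the facts give $\rho(x_k,x_n)=d_k$ for all $n>k$, so the tails are constant (hence non-decreasing) with non-increasing limits, and Lemma~\ref{l:inc-dec} applies directly.

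The remaining, and main, difficulty is the constant case $d_k\equiv d$, where I only know $\rho(x_k,x_n)\le d$ and $\rho(x_k,x_n)\to d$, but nothing about monotonicity of the tails. Here I refine once more: by a diagonalisation I arrange that for each $k$ either $\rho(x_k,x_n)=d$ for all $n>k$, or $\rho(x_k,x_n)<d$ for all $n>k$. Then I build the final sequence greedily: having chosen $x'_1,\dots,x'_j$, I pick the next point $x'_{j+1}$ far enough along that $\rho(x'_i,x'_{j+1})\ge\rho(x'_i,x'_j)$ for every $i\le j$. This is possible because there are only finitely many constraints and, for indices $i$ of the second type, $\rho(x'_i,x'_j)<d=\lim_n\rho(x'_i,x_n)$, while for indices of the first type the inequality holds automatically with both sides equal to $d$. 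The resulting sequence has non-decreasing tails still converging to the constant $d$, so Lemma~\ref{l:inc-dec} applies and completes the proof. The crucial point throughout is that the ultrametric structure both rules out the obstructive strictly increasing configuration and forces every remaining case into one of the monotone patterns handled by Lemma~\ref{l:inc-dec}.
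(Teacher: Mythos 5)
Your proof is correct, and while it leans on the same machinery as the paper (the isosceles property of ultrametrics, Lemma~\ref{l:unboundedDelta} for the unbounded case, Lemma~\ref{l:inc-dec} for the bounded case), your bounded case is organized around a genuinely different decomposition. The unbounded case is identical to the paper's. In the bounded case, the paper applies the subsequence extraction to the distances themselves: after passing to a subsequence, either $\{\rho(x_1,x_n)\}_n$ is increasing, or it is decreasing, or every tail $\{\rho(x_k,x_n)\}_{n>k}$ is constant; in each of these cases the isosceles property hands over the hypotheses of Lemma~\ref{l:inc-dec} in a line or two (in the increasing case all tails coincide with $\{\rho(x_1,x_n)\}_n$, so the limits $d_k$ are constant; in the decreasing case the tails are constant after discarding $x_1$; in the third case $d_{k+1}\le d_k$ follows from the ultrametric inequality). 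You instead force convergence of the tails first and trichotomize on the sequence of limits $(d_k)$; your rigidity facts ($\rho(x_k,x_m)=\max\{d_k,d_m\}$ when $d_k\neq d_m$, and $\rho(x_k,x_m)\le d_k$ when $d_k=d_m$), which are correct consequences of the isosceles property, then rule out the strictly increasing case, settle the strictly decreasing case immediately (constant tails), and leave the constant case $d_k\equiv d$, which is where you have to work: the further refinement (all later distances equal to $d$, or all below $d$) plus the greedy far-enough selection to manufacture non-decreasing tails converging to $d$. Both routes are sound and land on the same lemma. What yours buys is transparency: the exhaustiveness of your case analysis is just the monotone subsequence theorem applied to $(d_k)$, and the rigidity facts isolate exactly how ultrametricity enters, whereas the paper's opening claim ``by passing to a subsequence we may assume\dots'' silently absorbs a comparable recursion-plus-diagonalization. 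What the paper's version buys is brevity: extracting monotonicity at the level of the distances themselves makes the monotone tails required by Lemma~\ref{l:inc-dec} come out of the extraction for free, so no analogue of your greedy construction is needed.
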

\begin{proof}
Let $N\subset M$ be an infinite ultrametric space.
If $N$ is unbounded, then there is a sequence $\{x_n\}\subset N$ such that $\{\rho(x_1,x_n)\}$ is increasing and $\rho(x_1,x_n)\to\infty$.
Since $N$ is ultrametric,  $\rho(x_{2n},x_{2n+1})=\rho(x_1,x_{2n+1})$ for every $n\in\N$ and so we can apply Lemma~\ref{l:unboundedDelta}.

If $N$ is bounded, then it contains a bounded sequence $\{x_n\}$ of distinct points.
By passing to a subsequence we may assume that either $\{\rho(x_1,x_n)\}$ is increasing, or $\{\rho(x_1,x_n)\}$ is decreasing, or $\{\rho(x_k,x_n)\}_{n=k+1}^\infty$ is constant for each $k\in\N$.
We show that in each of these cases we can use Lemma~\ref{l:inc-dec}.

If $\{\rho(x_1,x_n)\}$ is increasing, then $\rho(x_k,x_n)=\max\{\rho(x_1,x_k),\rho(x_1,x_n)\}=\rho(x_1,x_n)$ for every $n>k\ge1$.
Consequently, $\lim_{n\to\infty}\rho(x_k,x_n)=\lim_{n\to\infty}\rho(x_1,x_n)$ and we can apply Lemma~\ref{l:inc-dec}.
If $\{\rho(x_1,x_n)\}$ is decreasing, then $\rho(x_k,x_n)=\max\{\rho(x_1,x_k),\rho(x_1,x_n)\}=\rho(x_1,x_k)$ for $n>k\ge1$.
Consequently, $\lim_{n\to\infty}\rho(x_k,x_n)=\rho(x_1,x_k)$ and, after discarding~$x_1$, we can apply Lemma~\ref{l:inc-dec} again.
Finally, if $\{\rho(x_k,x_n)\}_{n=k+1}^\infty$ is constant for each $k\in\N$, we denote $d_k=\lim_{n\to\infty}\rho(x_k,x_n)=\rho(x_k,x_{k+1})=\rho(x_k,x_{k+2})$.
Then $d_{k+1}=\rho(x_{k+1},x_{k+2})\le\max\{\rho(x_k,x_{k+1}),\rho(x_k,x_{k+2})\}=d_k$.
Hence $\{d_k\}$ is non-increasing and an application of Lemma~\ref{l:inc-dec} finishes the proof.
\end{proof}

\begin{remark}\label{r:unknown_ex}
We do not know whether $\ell_1$ embeds isometrically into the Lipschitz-free space over a general infinite metric space.
Below we show examples of metric spaces where it is impossible to find $r_n$s needed in Lemma~\ref{l:seq-isometry}.
Compare these examples with Lemma~\ref{l:unboundedDelta} and Lemma~\ref{l:inc-dec}.
In all of the examples $M=\{x_n\}$.
The metrics on $M$ are defined for $n>k$ by
\begin{itemize}\setlength\itemsep{0.3em}
\item $\rho(x_k,x_n)=k+n-\frac1k$
\item $\rho(x_k,x_n)=2-\frac1k$
\item $\rho(x_k,x_n)=2-\frac1k+\frac1n$
\item $\rho(x_k,x_n)=2-\frac1k-\frac1{2n}$
\item $\rho(x_k,x_n)=1+\frac1n$
\item $\rho(x_k,x_n)=1+\frac1{2k}+\frac1n$
\end{itemize}
\end{remark}

Now we turn our attention to the proof of Proposition~\ref{p:notRotund}.

\begin{lemma}\label{l:twoPoints}
Let $(M,\rho, 0)$ be a pointed metric space and let $\norm\cdot$ be the canonical norm on $\mc F(M)$.
Then for every $x,y\in M$ and $a,b\in\R$
\[
\norm{a\de_x+b\de_y}=\begin{cases}
\rho(x,0)\abs a+\rho(y,0)\abs b & \text{if $ab\ge0$,}\\
\rho(x,0)\abs a+\bigl(\rho(x,y)-\rho(x,0)\bigr)\abs b & \text{if $ab\le0$ and $\abs b\le\abs a$,}\\
\bigl(\rho(x,y)-\rho(y,0)\bigr)\abs a+\rho(y,0)\abs b & \text{if $ab\le0$ and $\abs b\ge\abs a$.}
\end{cases}
\]
\end{lemma}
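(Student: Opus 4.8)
The plan is to use that $\norm{\cdot}$ on $\mc F(M)$ is the dual norm to $\norm{\cdot}_{\Lip}$, so that, the unit ball of $\Lip_0(M)$ being symmetric,
\[
\norm{a\de_x+b\de_y}=\sup\bigl\{af(x)+bf(y)\setsep f\in\Lip_0(M),\ \norm f_{\Lip}\le1\bigr\}.
\]
The decisive reduction is that only the three points $0,x,y$ matter: by the McShane extension theorem, every $1$-Lipschitz function on the subset $\{0,x,y\}$ vanishing at~$0$ extends to an element of the unit ball of $\Lip_0(M)$ without increasing its Lipschitz constant. Hence, writing $p=f(x)$ and $q=f(y)$, the supremum above equals
\[
\max\bigl\{ap+bq\setsep \abs p\le\rho(x,0),\ \abs q\le\rho(y,0),\ \abs{p-q}\le\rho(x,y)\bigr\},
\]
a two-variable linear program over a polytope, which I would then solve in the three stated regimes.

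For the upper bound I would estimate $ap+bq$ directly from the three constraints. When $ab\ge0$ one immediately gets $\abs{ap+bq}\le\abs a\rho(x,0)+\abs b\rho(y,0)$. When $ab\le0$ and $\abs b\le\abs a$ I would use the identity $ap+bq=(a+b)p-b(p-q)$ together with $\abs{a+b}=\abs a-\abs b$ to obtain $ap+bq\le\abs a\rho(x,0)+\abs b\bigl(\rho(x,y)-\rho(x,0)\bigr)$; the third case follows from the symmetric identity $ap+bq=(a+b)q-a(q-p)$. These bounds hold regardless of the sign of $\rho(x,y)-\rho(x,0)$.

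For the matching lower bound I would exhibit, in each case, an explicit feasible pair $(p,q)$ which by the McShane reduction is realised by an admissible $f$. In the first case take $p=\rho(x,0)\sgn a$ and $q=\rho(y,0)\sgn b$; feasibility of $\abs{p-q}\le\rho(x,y)$ is exactly the triangle inequality, since $\sgn a=\sgn b$. In the second case, with $s=\sgn a$, take $p=s\rho(x,0)$ and $q=s\bigl(\rho(x,0)-\rho(x,y)\bigr)$; here $\abs{p-q}=\rho(x,y)$, the constraint $\abs q\le\rho(y,0)$ is again the triangle inequality, and since $bs=-\abs b$ the value equals the claimed one. The third case is symmetric under exchanging the roles of $x,y$ and $a,b$.

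I do not expect a genuine obstacle: the substance is the duality-plus-McShane reduction to a finite linear program, after which everything is elementary. The only points needing care are the bookkeeping of the signs of $a$ and $b$ (handled via $\sgn$ and the passage to $\sup$ without absolute values) and the feasibility checks for the candidate optima, each of which reduces to the triangle inequality for $\rho$. The degenerate cases in which $a$ or $b$ vanishes are trivial, and the three formulas are readily seen to agree on the overlaps $ab=0$ and $\abs a=\abs b$, so the statement is well defined.
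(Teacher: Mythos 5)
Your proposal is correct, and its core — duality plus the McShane extension reduction to the linear program over $A=\bigl\{(p,q)\setsep\abs p\le\rho(x,0),\ \abs q\le\rho(y,0),\ \abs{p-q}\le\rho(x,y)\bigr\}$ — is exactly the paper's reduction (the paper states the extension fact without naming McShane). Where you genuinely diverge is in solving that program. The paper describes the polytope $A$ geometrically, invokes the Bauer maximum principle (a continuous convex function on a compact convex set attains its maximum at an extreme point), reads off the relevant extreme points $(\rho(x,0),\rho(y,0))$, $\bigl(\rho(x,0),\rho(x,0)-\rho(x,y)\bigr)$, $\bigl(\rho(y,0)-\rho(x,y),\rho(y,0)\bigr)$ up to sign, and then spends the bulk of the proof on case-by-case inequality comparisons to decide which of the three candidate values is largest in each regime. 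You instead prove matching bounds directly: the identity $ap+bq=(a+b)p-b(p-q)$ together with $\abs{a+b}=\abs a-\abs b$ gives the mixed-sign upper bound in one line, and explicit feasible pairs, whose feasibility reduces to the triangle inequality, give the lower bound. Your route is more elementary — no Bauer principle, no enumeration of extreme points — and much shorter than the paper's inequality bookkeeping; what the paper's route buys is the geometric description of the unit ball of $\mc F(M)$ on $\spn\{\de_x,\de_y\}$ (its Figures 1 and 2) and an argument that works verbatim for maximizing any convex function of $\bigl(f(x),f(y)\bigr)$. One point to make explicit rather than wave at: when $a=0\neq b$, your case-1 feasible point $\bigl(0,\rho(y,0)\sgn b\bigr)$ need not satisfy $\abs{p-q}\le\rho(x,y)$, so the degenerate cases must indeed be settled separately (trivially, via $\norm{\de_y}=\rho(y,0)$), as you indicate.
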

See Fig.~\ref{fig:norm}.
\begin{figure}[!ht]
\includegraphics[scale=.8]{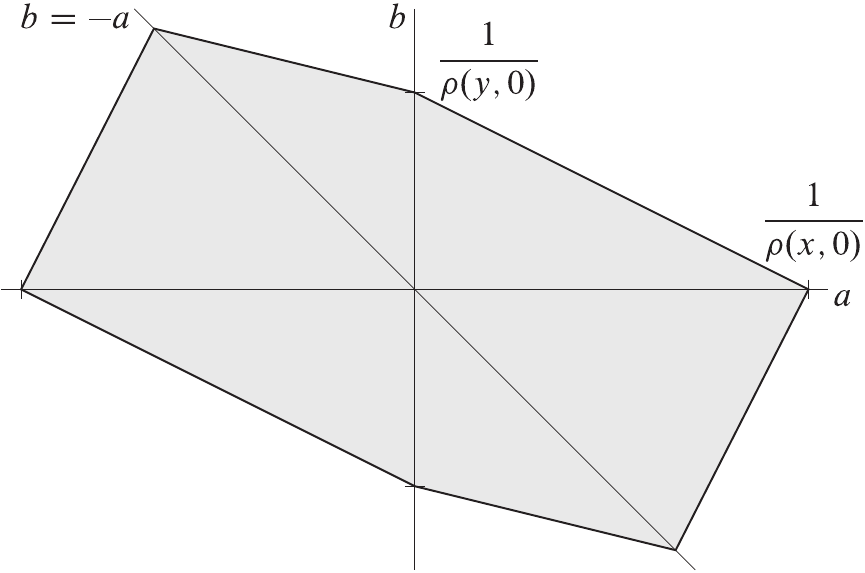}
\caption{Unit ball of $\mc F(M)$ on $\spn\{\de_x,\de_y\}$.}\label{fig:norm}
\end{figure}
This immediately implies Proposition~\ref{p:notRotund}.
Indeed, it suffices to pick any $x,y\in M$, $x\neq y$, $x\neq0$, $y\neq0$, take $u=\frac{\de_x}{\rho(x,0)}$ and $v=\frac{\de_y}{\rho(y,0)}$,
and note that $u\neq v$ since $\de_x$ and $\de_y$ are linearly independent.

\begin{proof}
Let $f\in\Lip_0(M)$ be a function with $\norm f\le 1$.
Then $\abs{f(x)}\le\rho(x,0)$, $\abs{f(y)}\le\rho(y,0)$, and $\abs{f(x)-f(y)}\le\rho(x,y)$.
On the other hand, given $u,v\in\R$ with $\abs u\le\rho(x,0)$, $\abs v\le\rho(y,0)$, and $\abs{u-v}\le\rho(x,y)$
there is $f\in\Lip_0(M)$ with $\norm f\le1$, $f(x)=u$, and $f(y)=v$.
Hence, if we define $g(u,v)=\abs{au+bv}$ for $u,v\in\R$, then
\[
\norm{a\de_x+b\de_y}=\sup_{\substack{f\in\Lip_0(M)\\\norm f\le1}}\abs{\ev f{a\de_x+b\de_y}}=\sup_{\substack{f\in\Lip_0(M)\\\norm f\le1}}\abs{af(x)+bf(y)}=\sup_A g(u,v),
\]
where $A=\bigl\{[u,v]\in\R^2\setsep\abs u\le\rho(x,0),\abs v\le\rho(y,0),\abs{u-v}\le\rho(x,y)\bigr\}$.
Since $-\rho(x,0)\le\rho(y,0)-\rho(x,y)\le\rho(x,0)$ and $-\rho(y,0)\le\rho(x,y)-\rho(x,0)\le\rho(y,0)$, the set $A$ looks like in Fig.~\ref{fig:setA}.
\begin{figure}[!ht]
\includegraphics[scale=.8]{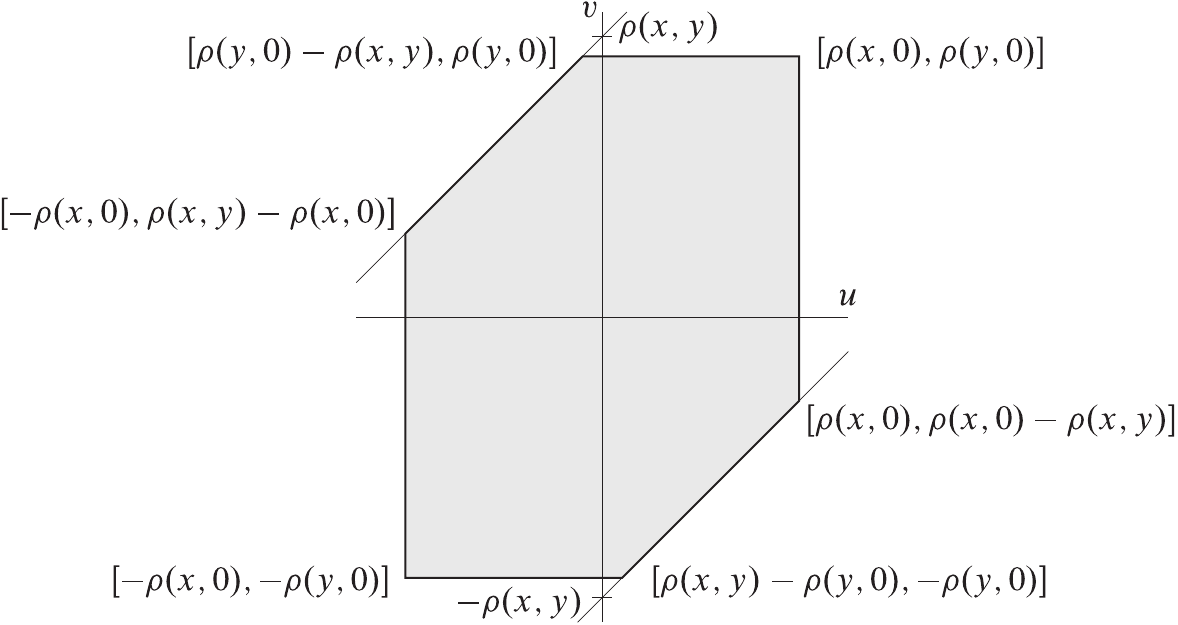}
\caption{The set $A$.}\label{fig:setA}
\end{figure}
The function $g$ is continuous and convex and the set $A$ is convex and compact, and so by the Bauer maximum principle $g$ attains its maximum on $A$ at some extreme point of $A$.
Therefore
\[
\norm{a\de_x+b\de_y}=\max\Bigl\{\abs{a\rho(x,0)+b\rho(y,0)}, \absa{a\rho(x,0)+b\bigl(\rho(x,0)-\rho(x,y)\bigr)}, \absa{a\bigl(\rho(y,0)-\rho(x,y)\bigr)+b\rho(y,0)}\Bigr\}.
\]

The first case is $ab\ge0$.
By the symmetry we can assume that $a\ge0$ and $b\ge0$.
Since $\rho(x,0)-\rho(x,y)\le\rho(y,0)$, we have $a\rho(x,0)+b\bigl(\rho(x,0)-\rho(x,y)\bigr)\le a\rho(x,0)+b\rho(y,0)$.
On the other hand, $2a\rho(x,0)+b\bigl(\rho(y,0)+\rho(x,0)-\rho(x,y)\bigr)\ge0$ and so $-a\rho(x,0)-b\bigl(\rho(x,0)-\rho(x,y)\bigr)\le a\rho(x,0)+b\rho(y,0)$.
Consequently, $\absa{a\rho(x,0)+b\bigl(\rho(x,0)-\rho(x,y)\bigr)}\le a\rho(x,0)+b\rho(y,0)$.
Since this estimate holds for any $x,y\in M$ and $a,b\ge0$, by interchanging $x$ with $y$ and $a$ with $b$ we obtain $\absa{b\rho(y,0)+a\bigl(\rho(y,0)-\rho(x,y)\bigr)}\le a\rho(x,0)+b\rho(y,0)$.
Therefore in this case $\norm{a\de_x+b\de_y}=a\rho(x,0)+b\rho(y,0)$.

Now assume that $ab\le0$.
By the symmetry we can assume that $a\ge0$ and $b\le0$.
The second case is then $-b\le a$.
We have $a\rho(x,0)-b\bigl(\rho(x,y)-\rho(x,0)\bigr)\ge-b\bigl(\rho(x,0)+\rho(x,y)-\rho(x,0)\bigr)\ge0$ and so
$\absa{a\rho(x,0)+b\bigl(\rho(x,0)-\rho(x,y)\bigr)}=a\rho(x,0)-b\bigl(\rho(x,y)-\rho(x,0)\bigr)$.
Further, the inequality $-b\bigl(\rho(x,y)-\rho(x,0)+\rho(y,0)\bigr)\ge0$ implies $a\rho(x,0)+b\rho(y,0)\le a\rho(x,0)-b\bigl(\rho(x,y)-\rho(x,0)\bigr)$
and the inequality $2a\rho(x,0)-b\bigl(\rho(x,y)-\rho(x,0)-\rho(y,0)\bigr)\ge-2b\rho(x,0)-b\bigl(\rho(x,y)-\rho(x,0)-\rho(y,0)\bigr)=-b\bigl(\rho(x,y)+\rho(x,0)-\rho(y,0)\bigr)\ge0$
implies $-a\rho(x,0)-b\rho(y,0)\le a\rho(x,0)-b\bigl(\rho(x,y)-\rho(x,0)\bigr)$.
Consequently, $\abs{a\rho(x,0)+b\rho(y,0)}\le a\rho(x,0)-b\bigl(\rho(x,y)-\rho(x,0)\bigr)$.

Similarly, since $a+b\ge0$, the inequality $(a+b)\rho(x,0)+(a+b)\rho(y,0)\ge(a+b)\rho(x,y)$ implies $-a\bigl(\rho(y,0)-\rho(x,y)\bigr)-b\rho(y,0)\le a\rho(x,0)-b\bigl(\rho(x,y)-\rho(x,0)\bigr)$
and the inequality $(a+b)\rho(y,0)\le(a+b)\rho(x,0)+(a+b)\rho(x,y)\le(a+b)\rho(x,0)+(a-b)\rho(x,y)$ implies
$a\bigl(\rho(y,0)-\rho(x,y)\bigr)+b\rho(y,0)\le a\rho(x,0)-b\bigl(\rho(x,y)-\rho(x,0)\bigr)$.
Consequently, $\absa{a\bigl(\rho(y,0)-\rho(x,y)\bigr)+b\rho(y,0)}\le a\rho(x,0)-b\bigl(\rho(x,y)-\rho(x,0)\bigr)$.
Therefore in this case $\norm{a\de_x+b\de_y}=a\rho(x,0)-b\bigl(\rho(x,y)-\rho(x,0)\bigr)$.

The lase case follows by interchanging $x$ with $y$ and $a$ with $b$.
\end{proof}

\section{\texorpdfstring{Embedding of $\ell_1$ into Banach spaces}{Embedding of l_1 into Banach spaces}}\label{sec:l1}

Here we gather some relations between various types of embedding of $\ell_1$ into a general Banach space.

\begin{definition}\label{d:asymp_l1}
We say that a Banach space $X$ is asymptotically isometric to $\ell_1$ if there are a Schauder basis $\{x_n\}$ of $X$ and a sequence $\{\ve_n\}\subset(0,1)$, $\ve_n\to0$ such that
\[
\sum_{n=1}^\infty(1-\ve_n)\abs{a_n}\le\norma{\sum_{n=1}^\infty a_nx_n}\le\sum_{n=1}^\infty\abs{a_n}
\]
for any $(a_n)\in\ell_1$.
\end{definition}

\begin{definition}
We say that a subspace $Y$ of a Banach space $X$ is asymptotically $1$-complemented in $X$ if there are a Schauder basis $\{x_n\}$ of $Y$ and a projection $P$ of $X$ onto $Y$ such that $\norm{(\Id_Y-P_n)\comp P}\to 1$,
where $P_n$ are the projections associated with $\{x_n\}$.
\end{definition}

The following picture shows relations between various types of embedding.
The black arrows denote implications that hold, the red arrows denote implications that do not hold, the green arrows denote implications that are unknown to us.
\begin{figure}[!ht]
\includegraphics[scale=1]{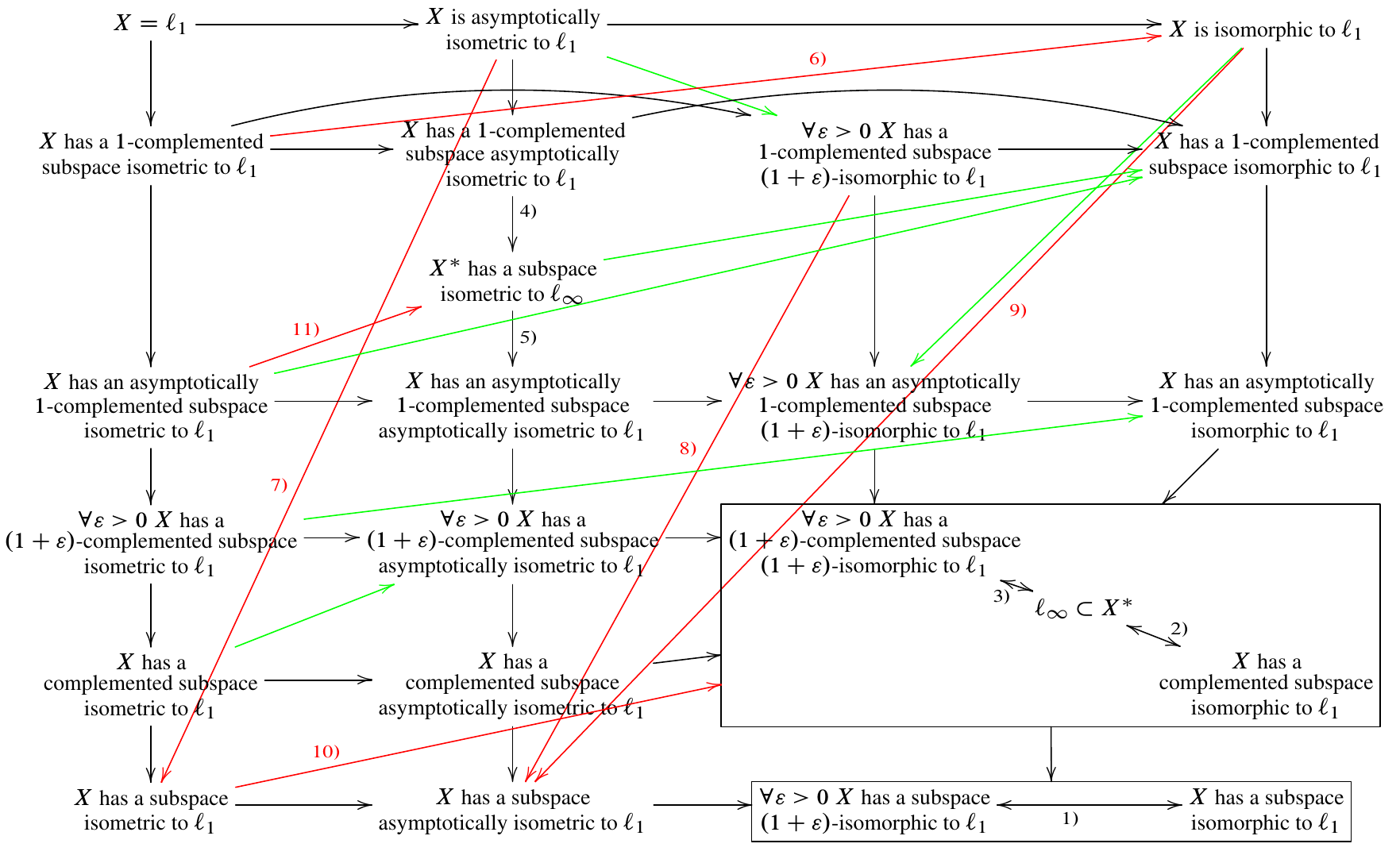}
\end{figure}
We remark that all the counterexamples for the red arrows are separable.

The implications without numbers should be trivial.
For the numbered implications, the arguments follow:

1) James's distortion theorem, \cite[Proposition~2.e.3]{LT}

2) Bessaga, Pełczyński, \cite[Theorem~4.44]{FHHMZ}

3) \cite[Theorem~5]{DRT}

4)
Let $P$ be the projection of $X$ onto $Y\subset X$ of norm $1$, and let $\{x_n\}$ be the basis of $Y$ from Definition~\ref{d:asymp_l1}.
Define $T\colon Y\to\ell_1$ by $T\bigl(\sum_{n=1}^\infty a_nx_n\bigr)=\sum_{n=1}^\infty(1-\ve_n)a_ne_n$ and put $S=T\comp P$.
Then $\norm T\le1$ and hence also $\norm S\le1$.
Further, $\norm{S(x_n)-e_n}=\norm{(1-\ve_n)e_n-e_n)}=\ve_n\to0$.
An application of \cite[Theorem~1]{D} finishes the proof.

5)
By \cite[Theorem~1]{D}, $X$ has a quotient $X/Z$ isometric to $\ell_1$.
Denote by $q\colon X\to X/Z$ the canonical quotient mapping.
Let $\{e_n\}$ be the canonical basis of $X/Z$.
Let $\{\ve_n\}\subset(0,1)$ be a decreasing sequence satisfying $\ve_n\to0$.
For each $n\in\N$ we find $x_n\in X$ such that $q(x_n)=e_n$ and $\norm{x_n}<1+\ve_n$.
Then
\[
\sum_{n=1}^N\abs{a_n}=\norma{\sum_{n=1}^N a_ne_n}=\norma{\sum_{n=1}^N a_nq(x_n)}=\norma{q\left(\sum_{n=1}^N a_nx_n\right)}\le\norma{\sum_{n=1}^N a_nx_n}\le\sum_{n=1}^N\abs{a_n}\norm{x_n}\le\sum_{n=1}^N(1+\ve_n)\abs{a_n}.
\]
Hence $Y=\cspan\{x_n\}$ is asymptotically isometric to $\ell_1$.

Further, define $T\colon X/Z\to Y$ by $T\bigl(\sum_{n=1}^\infty a_ne_n\bigr)=\sum_{n=1}^\infty a_nx_n$ and put $P=T\comp q$.
Then $P$ is clearly a projection from $X$ onto $Y$.
Moreover, denoting by $\{f_n\}$ the functionals biorthogonal to $\{e_n\}$,
\[\begin{split}
\norm{(\Id_Y-P_n)\comp P(x)}&=\norma{(\Id_Y-P_n)\comp T\left(\sum_{i=1}^\infty f_i(q(x))e_i\right)}=\norma{\sum_{i=n+1}^\infty f_i(q(x))x_i}\le\sum_{i=n+1}^\infty(1+\ve_i)\abs{f_i(q(x))}\\
&\le(1+\ve_n)\sum_{i=n+1}^\infty\abs{f_i(q(x))}\le(1+\ve_n)\sum_{i=1}^\infty\abs{f_i(q(x))}=(1+\ve_n)\norm{q(x)}\le(1+\ve_n)\norm x.
\end{split}\]

6) $X=\ell_1\oplus\ell_2$

7) Let $X=(\ell_1,\tnorm\cdot)$, where $\tnorm x^2=\norm x_1^2+\sum_{n=1}^\infty\frac1{2^n}x_n^2$.
Then $\tnorm\cdot$ is rotund, so $X$ does not contain a subspace isometric to $\ell_1$.

8) and 9) \cite[Example~2.8]{DLT}

10) $X=C([0,1])$.
By Pełczyński's theorem \cite[Theorem~7.6]{HMVZ} every non-reflexive infinite-dimensional complemented subspace of $C([0,1])$ contains $c_0$.

11) This follows from the proof of \cite[Proposition~4]{JR} together with \cite[Theorem~1]{D}.

\bigskip

We suspect that none of the green implications hold.

\end{document}